\documentclass[10pt,a4paper]{scrartcl}
\usepackage[utf8x]{inputenc}
\usepackage{amsmath}
\usepackage{amsthm}
\usepackage{amssymb}
\usepackage{amsfonts}
\usepackage{bm}
\usepackage{graphicx}
\usepackage{color}
\usepackage{dsfont}
\usepackage{subcaption}
\usepackage{placeins}

\newtheorem{theorem}{Theorem}
\newtheorem{lemma}{Lemma}

\newtheorem{remark}{Remark}
\newcommand{\R}{{\mathbb R}}

\newcommand{\bk}[2]{\left\langle #1,#2\right\rangle}

\newcommand{\argmin}{\mathop{\rm argmin}}%

\newcommand{\cB}{\mathcal{B}}

\newcommand{\cS}{{\mathcal S}}

\newcommand{\cA}{{\mathcal A}}

\newcommand{\bE}{{\mathbf E}}
\newcommand{\bP}{{\mathbf P}}

\newcommand{\eps}{\epsilon}

\newcommand{\wrt}{with respect to }

\title{Meshless discretization of LQ-type stochastic control problems}
\author{Ralf Banisch \and Carsten Hartmann}

\begin{document}

\maketitle

\begin{abstract}
We propose a novel Galerkin discretization scheme for stochastic optimal control problems on an indefinite time horizon. The control problems are linear-quadratic in the controls, but possibly nonlinear in the state variables, and the discretization is based on the fact that problems of this kind can be transformed into linear boundary value problems by a logarithmic transformation. We show that the discretized linear problem is dual to a Markov decision problem, the precise form of which depends on the chosen Galerkin basis. We prove a strong error bound in $L^{2}$ for the general scheme and discuss two special cases: a variant of the known Markov chain approximation  obtained from  a basis of characteristic functions of a box discretization, and a sparse approximation that uses the basis of committor functions of metastable sets of the dynamics; the latter is particularly suited for high-dimensional systems, e.g., control problems in molecular dynamics. We illustrate the method with several numerical examples, one being the optimal control of Alanine dipeptide to its helical conformation.  
\end{abstract}

\tableofcontents

\section{Introduction}

A large body of research is concerned with the question: How well can a continuous diffusion in an energy landscape be approximated by a Markov jump process (MJP) in the regime of low temperatures? Qualitatively, the approximation should be good if the system under consideration is metastable, for metastability precisely means that the diffusion process stays in the neighbourhood of the potential energy minima for a long time, and occasionally makes rapid transitions (jumps) between the wells. These metastable regions then become the states of the MJP, and the jump rates are determined by the frequency of the transitions. A rigorous mathematical proof of this fact, based on Gamma convergence, has recently been published for the special case of a symmetric double-well potential and in the limit of zero temperature \cite{Peletier2012,Mielke2012}. From a more practical point of view, Markov state models (e.g.~see \cite{Pande2010,Prinz,Schuette2011,Djurdjevac2010}) that are popular in the molecular dynamics 
community are approximations of metastable systems by MJPs that, in certain cases, can capture the correct transition rates between the metastable sets  \cite{Sarich2010,Djurdjevac2012}. 

The situation is more complicated for controlled diffusions, when the processes are subject to external controls that are chosen so as to minimize a given cost criterion, in which case one has to approximate the stochastic processes (in an appropriate sense) as well as the corresponding cost functional and the resulting optimal control forces. Available numerical methods for solving stochastic optimal control problems include methods that solve the dynamic programming PDE or Hamilton-Jacobi-Bellman equation, such as Markov chain approximations \cite{Kushner1992}, monotone schemes \cite{barles1991,barles2007} or finite elements \cite{Boulbrachene2001}. A common problem for these PDE-based methods is that they become inefficient if the problems are high-dimensional. An alternative are direct methods that iteratively minimize the cost functional using, e.g, entropy minimization \cite{todorov2009}, path integrals \cite{kappen2005} or policy iteration \cite{Howard2009}. This class of methods works in high 
dimensions, but has difficulties if the solvers get stuck in local minima or if the search space is too large. 

In this article we propose a Galerkin scheme for discretizing the dynamic programming equations that is meshless and hence can be applied to large-scale problems, assuming that the Galerkin basis is chosen in a clever way. Galerkin or, more generally, reduced basis methods are well established in terms of theory and numerical algorithms for linear elliptic equations, but to our knowledge very few results (e.g.~\cite{Douglas1970,Schuette2012}) are available for the nonlinear dynamic programming equations of stochastic control; cf.~also \cite{Kunisch2001}. To close this gap we confine ourselves to a class of optimal control problem that are linear-quadratic (LQ) in the control variables, but possibly nonlinear in the states; they have the feature that they can be transformed to a linear problem by a suitable (logarithmic) transformation and are hence amenable to a discretization using Galerkin methods. LQ-type optimal control problems of this kind appear relevant in many applications, including molecular dynamics \cite{Schuette2012,Stapelfeldt2004}, 
material science \cite{Steinbrecher2010,Asplund2011}, quantum computing \cite{Kosloff2002,rabitz2000}, or queuing networks \cite{asmussen1995,heidelberger1995} to mention just a few; see also \cite{WangDupuis2004,is_multiscale} for an application in statistics.

\paragraph{A simple paradigm}

As an introductory example consider the one-dimensional diffusion process $(X_t)_{t\ge 0}$ satisfying the It\^o stochastic differential equation 
\begin{equation*}
 dX_t = b(X_t,t)dt + \sqrt{2\epsilon} dB_t\,,t\ge 0
\end{equation*}
where $B_{t}$ is standard one-dimensional Brownian motion, $\eps>0$ is noise intensity, called \emph{temperature} in the following, and $b(\cdot,\cdot)$ is a smooth and Lipschitz continuous vector field. Specifically, we assume that $b$ is of the form
\begin{equation*}
b(x,t) = u_{t} - \nabla V(x)\,,
\end{equation*}
with $V\colon\R\to\R$ being a smooth potential energy function that is bounded from below, and measurable control $u$, that will be chosen so as to minimize a certain cost criterion. As an example consider the situation depicted in Figure \ref{fig:idea} and suppose that the control task is to force the particle in the left well to reach the right well in minimum time. For $u=0$ and in the limit of small noise, the average of the transition time from the left to the right well is given by the Kramers-Arrhenius law \cite{freidlin1984,berglund2011} 
\begin{equation*}
\bE[\tau] \asymp \exp(\Delta V/\eps)\,,\quad\eps\to 0\,,
\end{equation*}
where $\tau$ is the first exit time from the left well, $\Delta V$ is the energy barrier between the wells and $\bE[\cdot]$ is the expectation over all realizations of the process. We can speed up the transitions by tilting the potential according to $V(x)\mapsto V(x)-ux$, making the barrier smaller. If were allowed to apply arbitrarily large forces, we could make $\tau$ arbitrarily small, yet a real controller will seek to minimize the transition time without controlling too much. A natural choice in many cases is a penalization of the energy consumed by the controller, which leads to quadratic cost of the form \cite{Schuette2012}
\begin{equation*}
J(u) = \bE\left[\tau + \frac{1}{4}\int_{0}^{\tau}|u_{t}|^{2}dt\right]\,.
\end{equation*}
(The factor $1/4$ is for convenience.) The showcase optimal control problem now reads: 
\begin{equation}\label{minJ}
\min_{u\in\cA} J(u)
\end{equation}
over a set $\cA$ of admissible (e.g.~adapted) strategies and subject to 
\begin{equation}\label{SDE0}
 dX^{u}_t = (u_{t} - \nabla V(X^{u}_{t}))dt + \sqrt{2\epsilon} dB_t\,.
\end{equation}

\begin{figure}
 \centering
    \includegraphics[width=0.48\textwidth]{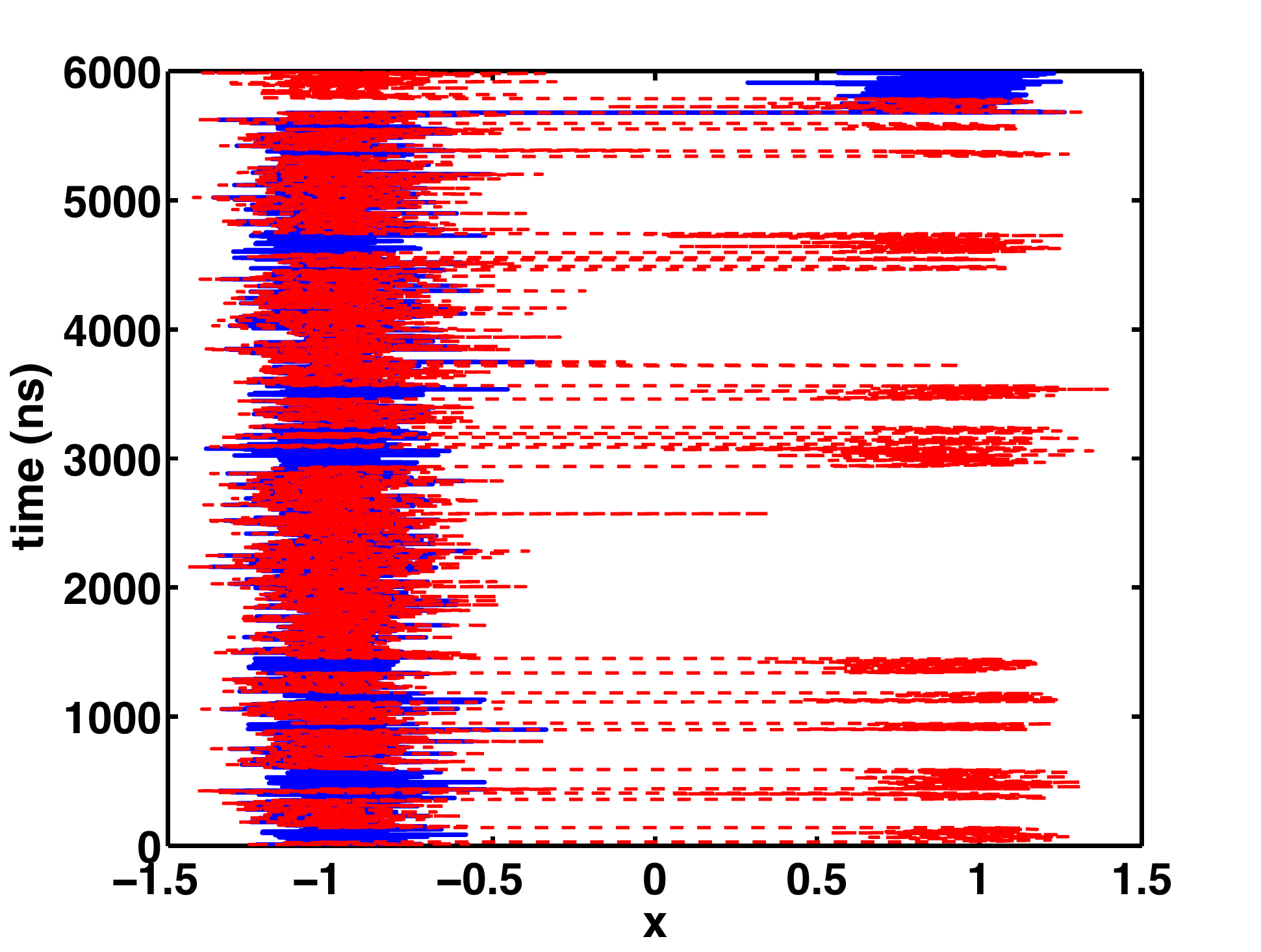} 
 \includegraphics[width=0.48\textwidth]{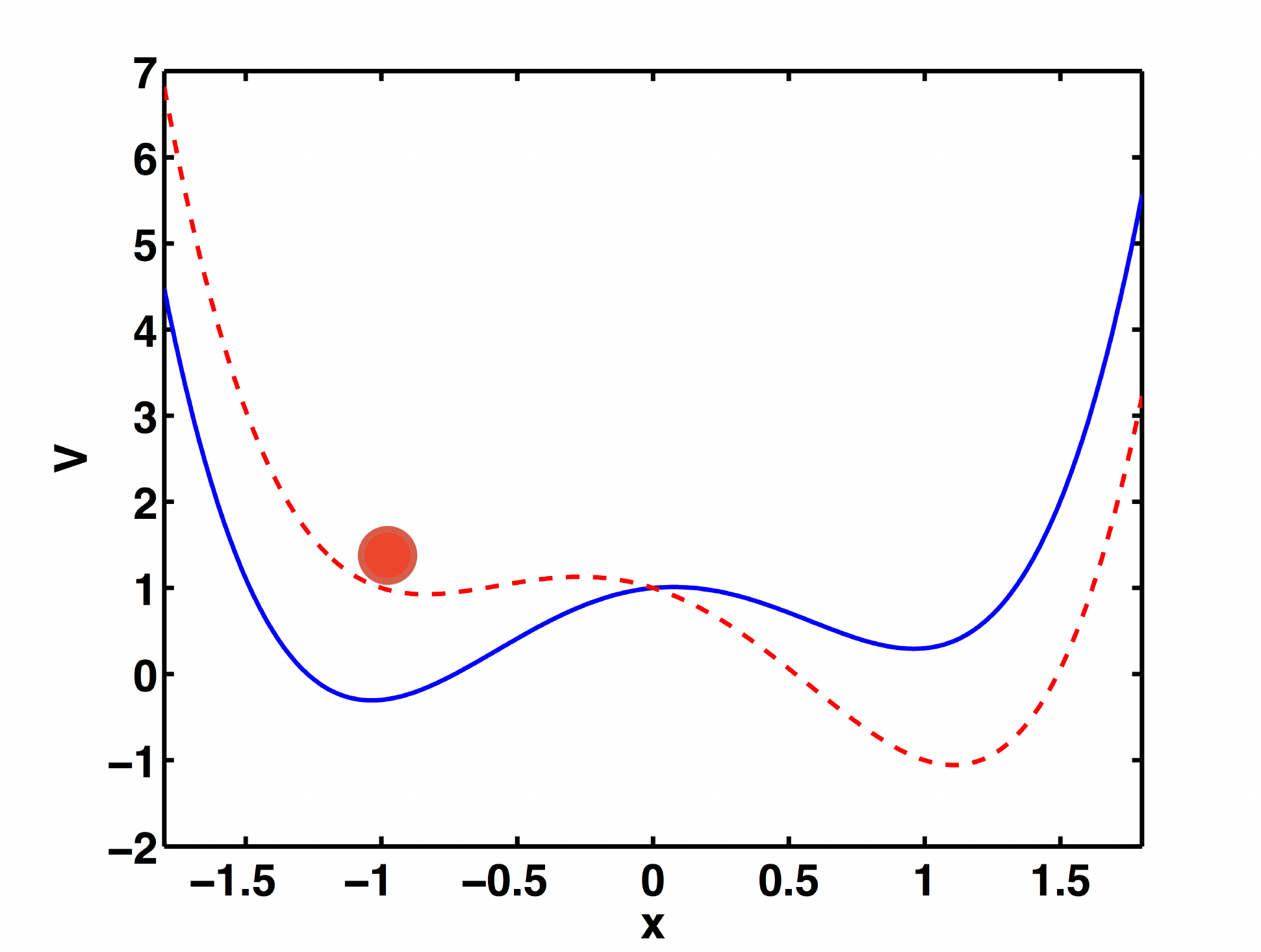}  
 \caption{Two typical realization of the bistable system, with and without tilting (left panel). The corresponding potential energies are shown in the right panel.}
 \label{fig:idea}
\end{figure}
In this paper we deal with the question how to solve optimal control problems of the form (\ref{minJ})--(\ref{SDE0}), beyond simple one-dimensional examples. The typical application that we have in mind is molecular dynamics that is high-dimensional and displays vastly different time scales. This defines the basic requirements of the numerical method used to solve optimal control problems: it must handle problems with large state space dimension and it must be able to capture the relevant processes of the dynamics, typically the slowest degrees of freedom in the system.         

For moderate controls, and if the temperature is small compared to the energy barrier, the dynamics in the above example basically consists of rare jumps between the potential wells, with the jump rate being controlled by $u$.  Therefore an efficient discretization would be one that resolves only the jumps between the minima by a 2-state Markov jump process with adjustable jump rates, according to the value of the control variable. If the control $u_{t}$ is given, the approximation of (\ref{SDE0}) by a 2-state MJP essentially boils down to an approximation of the dynamics by a time-inhomogeneous MSM, which requires only minor generalizations of the homogeneous MSM framework (see, e.g., \cite{Djurdjevac2012,Sarich2010}). When solving optimal control problems, however, the control becomes a function of the dynamics, for (\ref{SDE0}) enters as a constraint in the minimization problem (\ref{minJ}), which makes the corresponding dynamic programming equations nonlinear and renders the discretization less straightforward. The discretization scheme that we propose is based on the fact that the above control problem can be transformed into linear boundary value problems by a logarithmic transformation. The linear problem can then be discretized by standard means, including the discretization by an MSM if the dynamics is metastable. As we will show below the discretized linear problem is dual to a Markov decision problem (i.e.~a stochastic control problem for a continuous-time finite-state Markov process), which thus represents the natural Markovian discretization of the original stochastic control problem. The discretization is \emph{meshless}, in that the number of states of the Markov model does not scale exponentially with the dimension of the continuous state space, hence our method avoids the curse of dimensionality of typical grid based methods.

\paragraph{Organization of the article}
The rest of the paper is organised as follows: In section 2 we introduce the class of optimal control problems studied and state the duality between optimal control and sampling for both continuous SDEs and MJPs. In section 3, the Galerkin projection method is introduced, and some results about the approximation error are discussed. We also give a stochastic interpretation of the discretized linear equation in terms of Elber's milestoning process \cite{elber2004} and discuss special cases of the discretization, one of which being the known Markov chain approximation. Finally, we construct sampling estimators. Section 3 is the core parts of the paper and contains new results, including a strong $L^{2}$ error estimate for the Galerkin discretization. In section 4, we discuss numerical examples.

\subsection{Elementary notation and assumptions}

We implement the following notation and standing assumptions that will be used throughout the paper and that generalize the above example. Our optimal control problem has the following ingredients: 

\paragraph{Dynamics} 

Let $\cS\subset\R^{d}$ bounded with smooth (e.g.~Lipschitz) boundary and consider the potential energy function $V\colon\cS\to\R$, that we assume to be two times continuously differentiable and bounded from below. We consider $X_{t}\in\cS$ solving 
\begin{equation}\label{SDE}
 dX^{u}_t = (u_{t} - \nabla V(X^{u}_{t}))dt + \sqrt{2\epsilon} dB_t\,, \quad t\ge 0\,,
\end{equation}
where $B_{t}\in\R^{d}$ is $d$-dimensional Brownian motion under a probability measure $P$, and $u\colon[0,\infty)\to U\subset\R^{d}$ is a time dependent measurable and bounded function. We further impose reflecting boundary conditions at the set boundary $\partial\cS$, so that the process cannot leave the set $\cS\subset\R^{d}$; see \cite{gardiner1996} for algorithmic issues. 

\paragraph{Reversibility and invariant measure} 
For test functions $\varphi\colon\cS\to\R$ that are two times continuously differentiable, the infinitesimal generator of the uncontrolled process $X_{t} = X_{t}^{0}$ is defined as the second-order differential operator 
\[
L\varphi = \eps\Delta\varphi  - \nabla V\cdot\nabla\varphi\,.
\]
Define 
\[
d\mu(x) = \exp(-\eps^{-1}V(x)) dx\,
\] 
to be the Boltzmann measure at temperature $\eps>0$. Without loss of generality, we assume that $\mu$ is normalized, so that $\mu(\cS)=1$. 
For the subsequent analysis it will be convenient to think of $L$ as an operator acting on a suitable subspace of 
\[
L^{2}(\cS,\mu) =\left\{\phi\colon\cS\to\R \colon \int_{\cS} |\phi(x)|^{2}\,d\mu(x)<\infty \right\}\,,
\]
that is a weighted Hilbert space equipped with the scalar product
\[
\bk{v}{w}_{\mu} = \int_{\cS} v(x) w(x)d\mu(x)\,.
\]
It can be readily seen that $L$ is symmetric \wrt the weighted scalar product, \[
\bk{L v}{w}_{\mu}=\bk{v}{L w}_{\mu}\,,
\]
which implies that $X_{t}$ is reversible \wrt  the Boltzmann measure $\mu$. Moreover $\mu$ is the unique invariant measure of the process $X_{t}$ and satisfies
\[
\int_{\cS}(L\psi)d\mu =  \int_{\cS}\psi(L \mathds{1})d\mu = 0
\]
for all test functions $\psi\in L^{2}(\cS,\mu)$.

\paragraph{Quadratic cost criterion} 

We now introduce the cost criterion that the controller choosing $u$ in (\ref{SDE}) seeks to minimize. To this end let $A\subset\cS$ be a bounded subset that is fully contained in the interior of $\cS$ and call $\tau_A<\infty$ the random stopping time 
\[
\tau_A = \inf\{ t>0\colon X_{t}\in A\}\,.
\]
We define the cost functional 
\begin{equation}\label{J}
J(u) = \bE\left[\int_{0}^{\tau_A}\left\{f(X_{t}) + \frac{1}{4}|u_{t}|^{2}\right\}dt \right]\,,
\end{equation}
where $f\colon\cS\to\R$, called \emph{running cost}, is any nonnegative function with bounded first derivative; the factor $1/4$ in the penalization term is merely conventional. Cost functionals of this form are called \emph{indefinite time horizon cost}, because the terminal time $\tau_A$ is random. We will sometimes need the conditioned variant of the cost function, 
\begin{equation}\label{Jx}
J(u;x) = \bE_{x}\left[\int_{0}^{\tau_A}\left\{f(X_{t}) + \frac{1}{4}|u_{t}|^{2}\right\}dt \right], 
\end{equation}
where $\bE_{x}[\cdot]=\bE[\cdot|X_{0}=x]$ is a shorthand for the expectation over all realizations of $X_{t}$ starting at $X_{0}=x$, i.e.~the expectation \wrt $P$ conditional on $X_{0}=x$.

\paragraph{Admissible control strategies}

We call a control strategy $u=(u_{t})_{t\ge 0}$ \emph{admissible} if it is adapted to the filtration generated by $B_{t}$, i.e.,~if $u_{t}$ depends only on the history of the Brownian motion up to up to time $t$, and if the equation for $X^{u}_{t}$ has a unique strong solution. The set of admissible strategies is denoted by $\cA$. 

Even though $u_{t}$ may depend on the entire past history of the process up to time $t$, it turns out that optimal strategies are Markovian, i.e., they depend only on the current state of the system at time $t$. In our case, in which the costs are accumulated up to a random stopping time $\tau_A$, the optimal strategies are of the form
\[
u_{t} = \alpha(X^{u}_{t})
\]  
for some function $\alpha\colon\R^{d}\to\R^{d}$. Hence the optimal controls are time-homogeneous feedback policies, depending only on the current state $X_{t}^{u}$, but not on $t$.

\section{Optimal control and logarithmic transformation}\label{sec2}

In this section we establish a connection between optimal control of continuous-time Markov processes and certain path sampling problems, the latter are associated with a linear boundary value partial differential equation (PDE) that can be discretized by standard numerical techniques for PDEs or Monte-Carlo. The duality between optimal control and path sampling goes back to Wendel Fleming and co-workers (e.g.~\cite{fleming1977,fleming1995,james1992}) and is based on a logarithmic transformation of the function
\begin{equation}\label{valuefct}
W(x) = \min_{u\in\cA} J(u;x)\,.
\end{equation}

\subsection{Duality between control and path sampling for diffusions}\label{ssec:logtrafo} 
 
Our simple derivation of the duality between path sampling optimal control will be based on the Hamilton-Jacobi-Bellman equations of optimal control. To this end, we recall the dynamic programming principle for optimal control problems of the form (\ref{SDE})--(\ref{J}) that we state without proof; for details we refer to, e.g., \cite[Secs.~VI.2]{fleming2006}. 
\begin{theorem}
Let $W\in C^{2}(\cS)\cap C^{1}(\bar{\cS})\cap C(\bar{A})$ be the solution of
\begin{equation}\label{hjb}
\begin{aligned}
\min_{c\in\R^{d}}\left\{ (L + c\cdot\nabla)W(x) + f + \frac{1}{4}|c|^{2} \right\} = 0\,, &\quad x\in\cS\setminus A \\ 
W(x) = 0\,, & \quad x\in A\\ \nu\cdot\nabla W(x)= 0\,, & \quad x\in \partial\cS\,,
\end{aligned}
\end{equation}
where $\nu$ is the outward-pointing unit normal to $\partial\cS$ at $x$. Then
\[
W(x) = \min_{u\in\cA} J(u;x)\,
\]
where the minimizer $u^{*}=\argmin J(u)$ is unique and given by the feedback law 
\begin{equation}\label{oc}
u_{t} = -2\nabla W(X_{t}^{u})\,.
\end{equation}
\end{theorem}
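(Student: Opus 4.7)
The plan is to follow the standard verification argument for Hamilton--Jacobi--Bellman equations, adapted to a stopping time $\tau_A$ and reflecting boundary on $\partial\cS$. The first step is to carry out the pointwise minimization over $c\in\R^{d}$ inside the bracket in \eqref{hjb}: the map $c\mapsto c\cdot\nabla W(x)+\tfrac{1}{4}|c|^{2}$ is strictly convex, so the unique minimizer is $c^{*}(x)=-2\nabla W(x)$, with minimum value $-|\nabla W(x)|^{2}$. Substituting back, the HJB equation becomes the linear inequality
\begin{equation*}
(L+c\cdot\nabla)W(x)+f(x)+\tfrac14|c|^{2}\ \ge\ 0\quad\text{for all }c\in\R^{d},
\end{equation*}
with equality iff $c=-2\nabla W(x)$. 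Strict convexity also gives uniqueness of the optimizer in \eqref{oc}.

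Next I would apply It\^o's formula to $W(X^{u}_{t})$ along any admissible trajectory. Because of the reflecting boundary condition on $\partial\cS$, the semimartingale decomposition picks up a boundary local-time term of the form $\int_{0}^{t}\nu\cdot\nabla W(X^{u}_{s})\,dL_{s}$, but this vanishes identically thanks to the Neumann condition $\nu\cdot\nabla W=0$ on $\partial\cS$. Stopping at $\tau_{A}\wedge T$ and using $W=0$ on $A$ together with the terminal boundary condition, one obtains
\begin{equation*}
W(x)\ =\ \bE_{x}\!\left[-\int_{0}^{\tau_A\wedge T}\!(L+u_{t}\cdot\nabla)W(X^{u}_{t})\,dt\right]+\bE_{x}[W(X^{u}_{\tau_A\wedge T})\mathds{1}_{\tau_A>T}]+\text{martingale term},
\end{equation*}
where the stochastic integral against $dB_{t}$ is a true martingale after localization because $\nabla W$ is bounded on $\bar\cS\setminus A$.

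Combining this identity with the pointwise inequality from Step 1 gives the sub-optimality bound $W(x)\le\bE_{x}\!\left[\int_{0}^{\tau_A\wedge T}(f+\tfrac14|u_{t}|^{2})\,dt\right]+\bE_{x}[W(X^{u}_{\tau_A\wedge T})\mathds{1}_{\tau_A>T}]$, and the bound is attained with equality when $u_{t}=-2\nabla W(X^{u}_{t})$. Passing to the limit $T\to\infty$ via dominated/monotone convergence, using $f\ge 0$, boundedness of $W$ on $\bar\cS$ and the hypothesis $\tau_{A}<\infty$ a.s., yields $W(x)\le J(u;x)$ for every $u\in\cA$ and $W(x)=J(u^{*};x)$ for the feedback $u^{*}_{t}=-2\nabla W(X^{u^{*}}_{t})$.

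The main obstacle is verifying that the candidate feedback law $u^{*}_{t}=-2\nabla W(X^{u^{*}}_{t})$ actually lies in $\cA$: one must show that the closed-loop SDE $dX^{*}_{t}=-(2\nabla W+\nabla V)(X^{*}_{t})\,dt+\sqrt{2\eps}\,dB_{t}$ with reflection on $\partial\cS$ has a unique strong solution with $\bE_{x}[\tau_A]<\infty$, and that the localization used to turn the stochastic integral into a martingale can be removed in the limit. This relies on the regularity assumption $W\in C^{2}(\cS)\cap C^{1}(\bar\cS)$, which gives a Lipschitz drift up to the boundary, together with an a priori bound of the form $\bE_{x}\!\int_{0}^{\tau_A}|u^{*}_{t}|^{2}dt\le 4(W(x)+\bE_{x}\!\int_{0}^{\tau_A}f\,dt)<\infty$ obtained directly from the cost identity. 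Once these integrability and well-posedness facts are in place, the passage to the limit and the identification $W=\min_{u}J(u;\cdot)$ are routine.
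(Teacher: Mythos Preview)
Your verification argument is correct and is exactly the standard route (pointwise minimization in the HJB equation, It\^o's formula with the Neumann local-time term vanishing, sub-optimality for arbitrary $u$ and equality for the feedback $u^{*}=-2\nabla W$, then a limiting argument). Note, however, that the paper does not actually prove this theorem: it is stated without proof, with a reference to \cite[Secs.~VI.2]{fleming2006} for the details, so there is no ``paper's own proof'' to compare against beyond observing that your sketch follows precisely the classical argument found in that reference.
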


Before we proceed with the derivation of the dual sampling problem, we shall briefly discuss some of the consequences of the dynamic programming approach.  
Equation (\ref{hjb}) is the Hamilton-Jacobi-Bellman (HJB) equation, also called \emph{dynamic programming equation} associated with the following optimal control task:
\[
\min_{u\in\cA} J(u) \quad\textrm{s.t.}\quad  dX^{u}_t = (u_{t} - \nabla V(X^{u}_{t}))dt + \sqrt{2\epsilon} dB_t\,.
\]

The function $W(x)$ is called \emph{value function} or \emph{optimal cost-to-go}. Existence and uniqueness of classical (i.e.~smooth) solutions follow from our assumptions on the potential and the properties of $\cS$, using the results in \cite[Secs.~VI.3--5]{fleming2006}. 

Given the value function, and using the fact that optimal control is the gradient of two times the value function, the optimally controlled process $X^{*}_{t}$ solves the SDE
\begin{equation}\label{SDE_controlled}
 dX^{*}_t = - \nabla U(X^{*}_{t}))dt + \sqrt{2\epsilon} dB_t\,. 
\end{equation}
with the new potential 
\[
U(x)=V(x)+2W(x)\,.
\]
Note that $X^{*}_{t}$ is reversible \wrt a tilted Boltzmann distribution having the density $\rho^{*}=\exp(-U/\eps)$. The reversibility follows from the fact that the value function does not depend on $t$, which would not be the case if the terminal time $\tau_A$ were a deterministic stopping time rather than a first exit time.\footnote{For finite time-horizon control problems the value function depends on the time $\tau_A-t$ remaining until the terminal time $\tau_A$.}

\paragraph{Logarithmic transformation and Feynman-Kac formula (part I)}
The approach that is pursued in this article is to discretize the HJB equation by first removing the nonlinearity by a logarithmic transformation of the value function. Let
\begin{equation}\label{logtrafo1}
\phi(x) = \exp(-\eps^{-1}W(x))\,.
\end{equation}  
It follows by chain rule that   
\begin{equation}\label{logtrafo2}
\eps \frac{L \phi}{\phi}= -L W + |\nabla W|^{2}\,,\quad \phi\neq 0\,,
\end{equation} 
which, together with the relation    
\begin{equation*}
-|\nabla W|^{2} = \min_{c\in\R^{m}}\left\{c\cdot\nabla W + \frac{1}{4}|c|^{2}\right\}\,,
\end{equation*}
implies that (\ref{hjb}) is equivalent to the linear boundary value problem
\begin{equation}\label{linbvp}
\begin{aligned}
\left(L - \eps^{-1}f\right) \phi(x)  = 0\,, & \quad x\in \cS\setminus A \\ \phi(x) = 1\,, & \quad x\in A\\ 
\nu \cdot\nabla\phi(x)= 0\,, & \quad x\in\partial\cS\,.
\end{aligned}
\end{equation}
By the above assumptions and the strong maximum principle for elliptic PDEs it follows that (\ref{linbvp}) has a classical solution $\phi^{\eps}\in C^{2}(\cS)\cap C(\bar{\cS})\cap C(\bar{A})$ that is uniformly bounded. Further note that the value function is uniformly bounded on $\cS$, hence the log transformation (\ref{logtrafo1})--(\ref{logtrafo2}) is well defined. 

Now, by the Feynman-Kac theorem \cite[Thm.~8.2.1]{oksendal2003}, the linear boundary value problem has an interpretation in terms of a sampling problem. The solution (\ref{linbvp}) can be expressed as the conditional expectation   
\begin{equation}\label{phi}
\phi(x) = \bE_{x}\left[\exp\left(-\frac{1}{\eps}\int_{0}^{\tau_A} f(X_{t})\,ds\right)\right]
\end{equation}
over all realizations of the following SDE on $\cS$: 
 \begin{equation}\label{sde_sans}
 dX_t = - \nabla V(X_{t})dt + \sqrt{2\epsilon} dB_t\,, \quad X_{0}=x\,.
\end{equation}
\begin{remark}
The Neumann boundary condition in (\ref{hjb}) and (\ref{linbvp}) amounts to the reflecting boundary conditions for the processes $X^{u}_{t}$ and $X_{t}$ at $\partial\cS$. 
\end{remark}

\begin{remark}
In probabilistic terms, the logarithmic transformation amounts to a suitable change of measure of the underlying Markov process, by which the control variable is eliminated \cite{Pra1996}; see also \cite{Hartmann2012} and the references given there. 
\end{remark}

\subsection{Duality between control and path sampling for jump processes}\label{sec_MJP}

In the last section, we have established a connection between an optimal control problem and sampling of a continuous path observables $\phi(x)$. In this section, we will repeat the same construction for Markov jump processes, however, in reverse order: starting from a path observable for a Markov jump process, we derive the dual optimal control using a logarithmic transformation. 

Let $(\hat X_t)_{t\geq 0}$ be a MJP on the discrete state space $\hat{\cS} = \{1,\ldots,n\}$ with infinitesimal generator $G \in\R^{n\times n}$. 
The entries of the generator matrix $G$ satisfy
\[
G_{ij}\ge 0 \;\textrm{ for $i\neq j$ and }\; G_{ii} = -\sum_{j\neq i} G_{ij}\,,
\]
where the off-diagonal entries of $G$ are the jump rates between the states $i$ and $j$. 
\paragraph{Logarithmic transformation and Feynman-Kac formula (part II)}
In accordance with the previous subsection let $\hat{f}: \hat{\mathcal{S}} \to\R$ be nonnegative and define the stopping time 
\[
\tau_A = \inf\{ t> 0\colon \hat X_t \in A\}.
\]
to be the first hitting time of a subset $A\subset\hat{\mathcal{S}}$. As before we introduce a function 
\[
\hat{\phi}(i) = \bE_{i}\left[\exp\left(-\frac{1}{\eps}\int_0^{\tau_A}\hat{f}(\hat{X}_s)ds\right)\right],
\]
with $\bE_{i}[\cdot]=\bE[\cdot| \hat{X}_0 = i]$ being the conditional expectation over the realizations of $\hat{X}_{t}$ starting at $\hat{X}_{0}=i$. We have the following Lemma that is the exact analogue of the Feynman-Kac formula for diffusions for the case of an MJP (see  \cite{Skorokhod1975}). 
\begin{lemma}\label{lemma_backw_mjp}
The function $\hat{\phi}(i)$ solves the linear boundary value problem  
\begin{equation} \label{MJP_FK}
\begin{aligned}
 \sum_{j\in \hat{\mathcal{S}}} G_{ij} \hat{\phi}(i) - \eps^{-1} \hat{f}(i) \hat{\phi}(i) = 0\,, & \quad i\in \hat{\cS}\setminus A \\
 \hat{\phi}(i) = 1\,, & \quad i\in A\,.
\end{aligned}
\end{equation}
\end{lemma}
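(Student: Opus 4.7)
The plan is to use first-jump analysis combined with the strong Markov property, which is the standard route to Feynman--Kac identities for Markov jump processes.

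First I verify the boundary condition: if $i \in A$, then $\tau_A = 0$ and the integral in the definition of $\hat\phi$ vanishes, giving $\hat\phi(i) = e^{0} = 1$ as required. For $i \notin A$, I condition on the time $\tau_1$ of the first jump out of state $i$. Standard MJP theory gives that $\tau_1$ is exponentially distributed with rate $\lambda_i := -G_{ii} = \sum_{j\neq i} G_{ij}$, and, independently, $\hat X_{\tau_1} = j$ with probability $G_{ij}/\lambda_i$ for each $j \neq i$. Since $\hat X_s \equiv i$ on $[0,\tau_1]$, the running-cost integral contributes $\hat f(i)\tau_1$ over this interval; after the jump the strong Markov property identifies the conditional expectation of the remaining contribution with $\hat\phi(\hat X_{\tau_1})$. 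Assembling these pieces,
\[
\hat\phi(i) \;=\; \bE\!\left[e^{-\hat f(i)\tau_1/\eps}\right] \sum_{j \neq i} \frac{G_{ij}}{\lambda_i}\,\hat\phi(j).
\]

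The exponential integral evaluates explicitly to $\bE[e^{-\hat f(i)\tau_1/\eps}] = \lambda_i/(\lambda_i + \hat f(i)/\eps)$. Multiplying both sides of the previous display by $\lambda_i + \hat f(i)/\eps$ and using $G_{ii} = -\lambda_i$ to absorb the $\lambda_i \hat\phi(i)$ term into the sum gives
\[
\sum_{j \in \hat{\cS}} G_{ij}\,\hat\phi(j) \;-\; \eps^{-1}\hat f(i)\,\hat\phi(i) \;=\; 0,
\]
which is the asserted equation (the statement in the lemma evidently contains a typographical $\hat\phi(i)$ in place of $\hat\phi(j)$ inside the sum, since otherwise the sum collapses by $\sum_j G_{ij}=0$).

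The main technical subtlety is the usual one: one needs $\tau_A < \infty$ almost surely from every starting state, so that the first-jump decomposition is meaningful and produces a finite object. Nonnegativity of $\hat f$ ensures the integrand is bounded by $1$ and hence that $\hat\phi\in[0,1]$ is well defined regardless, but the characterization via \eqref{MJP_FK} implicitly needs accessibility of $A$ from $\hat{\cS}\setminus A$ under $G$, which one reads as a standing irreducibility hypothesis. An alternative and perhaps cleaner route is to apply Dynkin's formula to the process $M_t := \exp(-\eps^{-1}\int_0^{t\wedge\tau_A}\hat f(\hat X_s)\,ds)\,\hat\phi(\hat X_{t\wedge\tau_A})$; its drift is $(G\hat\phi)(i) - \eps^{-1}\hat f(i)\hat\phi(i)$ on $\hat{\cS}\setminus A$, and vanishing of this drift is equivalent to the BVP in the lemma, so optional stopping at $\tau_A$ together with $\hat\phi|_A = 1$ recovers the probabilistic representation.
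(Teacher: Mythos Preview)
Your first-jump argument is correct and is the standard derivation of the discrete Feynman--Kac identity; you have also correctly spotted the typographical slip in the displayed equation (the summand should be $\hat\phi(j)$, not $\hat\phi(i)$). The paper itself does not supply a proof of this lemma: it merely records the statement as ``the exact analogue of the Feynman--Kac formula for diffusions for the case of an MJP'' and refers the reader to Skorokhod (1975), so there is no in-paper argument to compare against. Your conditioning on $(\tau_1,\hat X_{\tau_1})$ together with the Laplace-transform computation $\bE[e^{-\hat f(i)\tau_1/\eps}]=\lambda_i/(\lambda_i+\eps^{-1}\hat f(i))$ is exactly the textbook route, and the Dynkin/martingale alternative you sketch is equally valid and is in fact the argument closest in spirit to what Skorokhod's reference would contain.
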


Now, in one-to-one correspondence with the log transformation procedure in the diffusion case, the function
\[
\hat W = -\eps\log \hat \phi
\] 
can be interpreted as the value function of an optimal control problem for the MJP $(\hat X_t)_{t\geq 0}$. The derivation of the dual optimal control problem goes back to \cite{SheuPhD,Sheu1985}, and we repeat it here in condensed form for the reader's convenience (see also \cite[Sec.~VI.9]{fleming2006}):  
First of all note that $\hat W$ satisfies the equation
\begin{align*}
\exp(\hat W/\eps) G\exp(-\hat W/\eps) - \eps^{-1}\hat{f} = 0\,, & \quad i\in A^c\\ \hat W(i) = 0\,, & \quad i\in A\,.
\end{align*}
and define a new generator matrix by 
\[
G^{v}=(G^{v}_{ij})_{i,j\in\hat{\cS}} \,, \quad G^v_{ij} = \frac{G_{ij}v(j)}{v(i)}\,,
\]
with $v(i)>0$ for all $i\in\hat{\cS}$. The exponential term in above equation for $\hat{W}$ can be recast as
\[
\frac{(G\hat{\phi})(i)}{\hat{\phi}(i)} = \min_{v>0}\{ - (G^{v}\hat{W})(i) + k^{v}(i) \}
\]
where we have introduced the shorthand 
\[
k^v(i) = \eps(G^v(\log v))(i) - \eps\frac{(Gv)(i)}{v(i)}\,,
\] 
and used the identity
\[
\min_{y\in\R}\{e^{-y} + ay\} = a− a\log a\,,\quad a>0\,.
\]
As a consequence, (\ref{MJP_FK}) is equivalent (i.e.~dual) to 
\begin{equation}\label{MJP_HJB}
\begin{aligned}
\min_{v>0} \left\{(G^v \hat W)(i) + k^v(i) + \hat{f}(i)\right\} = 0\,, & \quad  i\in A^c\\  
\hat W(i) = 0\,, &  \quad i\in A\,.
\end{aligned}
\end{equation}
which is the dynamic programming equation of a Markov decision problem, i.e.~an optimal control problem for an MJP (e.g.~see \cite[Sec.~VI.9]{fleming2006}): Minimize
\begin{equation} \label{MJP_cost}
 \hat J(v) = \bE\left[\int_0^{\tau_A}\left\{\hat{f} (\hat X_s^v) + k^v(\hat X_s^v)\right\}ds\right]
\end{equation}
over all component-wise strictly positive controls $v$ and subject to the constraint that the process $(\hat X_t^v)_{t\geq 0}$ is generated by $G^v$. It readily follows from the derivation of (\ref{MJP_HJB}) that the minimizer exists and is given by 
\[
v^*(i) = \hat\phi(i)\,.
\]  
The next lemma records some important properties of the controlled Markov jump process with generator $G^{v}$ and the corresponding cost functional (\ref{MJP_cost}).

\begin{lemma}\label{lemma_props}
Let $G^v$ and $k^v$ be  defined as above. 
\begin{enumerate}
\item[(i)] Let $(\hat{X}_{t})_{t\ge 0}$ with generator $G$ have a unique stationary distribution $\pi$ and let $G$ be reversible with respect to $\pi$. Then the tilted distribution $\pi^v(i) = Z_v^{-1} v^2(i)\pi(i)$, with $Z_v$ an appropriate normalization constant, is the unique probability distribution such that $G^v$ is reversible and stationary with respect to $\pi^v$.
\item[(ii)] Let $\hat P$ denote the probability measure on the space of trajectories generated by $\hat{X}_t$ with initial condition $\hat{X}_{0}=i$, and let $\hat Q$ be the corresponding probability measure generated by $\hat{X}^{v}_{t}$ with the same initial condition $\hat{X}^{v}_{0}=i$. Then $\hat Q$ is absolutely continuous \wrt $\hat P$ and the expected value of the running cost $k^{v}$ is the Kullback-Leibler (KL) divergence between $\hat Q$ and $\hat P$, i.e., 
\[
\bE_{\hat Q}\left[\int_0^{\tau_A} k^{v}(\hat X^{v}_s)ds\right] = \int \log\frac{d\hat Q}{d\hat P} d\hat Q
\]
where $\bE_{\hat Q}[\ldots]$ denotes expectation over all realizations of $\hat{X}^{v}_{t}$ starting at $\hat{X}^{v}_{0}=i$. 

\end{enumerate}
\end{lemma}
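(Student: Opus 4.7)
I treat the two parts separately since they call on different techniques.

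\emph{Part (i).} The plan is direct verification of detailed balance. For $i \neq j$, substituting the definitions gives
\[
\pi^v(i)\,G^v_{ij} = Z_v^{-1} v^2(i)\pi(i)\cdot\frac{G_{ij}v(j)}{v(i)} = Z_v^{-1} v(i)v(j)\,\pi(i)G_{ij},
\]
and analogously $\pi^v(j)G^v_{ji} = Z_v^{-1} v(i)v(j)\,\pi(j)G_{ji}$. Reversibility of $G$ with respect to $\pi$ equates the two, so $G^v$ is $\pi^v$-reversible and in particular $\pi^v G^v = 0$. Uniqueness reduces to uniqueness of $\pi$: since $v>0$ pointwise, the off-diagonal support of $G^v$ coincides with that of $G$, so irreducibility is inherited and guarantees a unique invariant distribution.

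\emph{Part (ii).} Here the plan is a Girsanov-type likelihood computation for pure jump processes. Absolute continuity $\hat Q \ll \hat P$ on $\mathcal{F}_{\tau_A}$ is immediate from $v>0$: the two generators share the same off-diagonal support, so paths of positive $\hat Q$-measure also have positive $\hat P$-measure. Writing $M_t = d\hat Q/d\hat P|_{\mathcal{F}_t}$, the standard change-of-measure formula for multivariate counting processes yields
\[
\log M_t = \sum_{s\le t,\,\Delta \hat X_s \neq 0}\log\frac{G^v_{\hat X_{s-}\hat X_s}}{G_{\hat X_{s-}\hat X_s}} - \int_0^t \bigl[\lambda^{G^v}(\hat X_s) - \lambda^{G}(\hat X_s)\bigr]\,ds,
\]
where $\lambda^G(i) = -G_{ii}$. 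Compensating each counting process $N^{ij}$ by its $\hat Q$-intensity $\one_{\{\hat X_{s-}=i\}}G^v_{ij}$ inside $\bE_{\hat Q}[\log M_t]$ produces
\[
\bE_{\hat Q}[\log M_t] = \bE_{\hat Q}\!\left[\int_0^t F(\hat X_s)\,ds\right],\qquad F(i) = \sum_{j\neq i}\left\{G^v_{ij}\log\frac{G^v_{ij}}{G_{ij}} - G^v_{ij} + G_{ij}\right\}.
\]

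The crucial algebraic step is then to collapse $F(i)$ to the compact form $(G^v\log v)(i) - (Gv)(i)/v(i)$: substituting $G^v_{ij}/G_{ij} = v(j)/v(i)$, the logarithmic term telescopes to $(G^v\log v)(i)$ upon using $-G^v_{ii}=\sum_{j\neq i}G^v_{ij}$, while the remainder $\sum_{j\neq i}G_{ij}(v(j)-v(i))/v(i)$ reduces to $(Gv)(i)/v(i)$ via $-G_{ii}=\sum_{j\neq i}G_{ij}$. Matching this against the definition of $k^v$ and passing from deterministic $t$ to $\tau_A$ by optional stopping then concludes the proof. The main technical obstacle is precisely this stopping-time step: one must justify evaluation of $M$ at $\tau_A$, either by verifying that $M$ is a genuine $\hat P$-martingale up to $\tau_A$ or, more robustly, by localizing along $\tau_A \wedge n$ and appealing to integrability of $k^v$ together with $\bE_{\hat Q}[\tau_A] < \infty$, which in turn leans on the irreducibility recorded in part (i) and on boundedness of $\hat{\cS}$.
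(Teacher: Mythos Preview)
Your proof is correct; the routes differ in both parts from what the paper does.

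For part~(i), the paper runs the argument in the reverse direction: it \emph{assumes} some $\pi^v$ satisfies detailed balance for $G^v$, manipulates the identity $\pi^v(i)G^v_{ij}=\pi^v(j)G^v_{ji}$ using $\pi(i)G_{ij}=\pi(j)G_{ji}$, and deduces that $\pi^v(i)/(\pi(i)v^2(i))$ must be independent of~$i$, so the proposed form is forced. This yields existence and uniqueness in one stroke. Your direct verification of detailed balance plus the separate irreducibility argument is cleaner and equally valid; note that the paper's derivation also tacitly needs connectedness of the rate graph to propagate the pairwise constraint across all of~$\hat\cS$, so the apparent economy is partly illusory.

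For part~(ii), the paper does essentially nothing beyond rewriting $k^v$ in the closed form
\[
k^v(i)=\eps\sum_{j\neq i}G_{ij}\Bigl\{\tfrac{v(j)}{v(i)}\bigl[\log\tfrac{v(j)}{v(i)}-1\bigr]+1\Bigr\}
\]
and then citing a reference (Dupuis--Ellis) for the identification of this expression with the relative-entropy rate of the controlled jump process against the uncontrolled one. Your Girsanov computation---writing $\log M_t$ via the jump-process likelihood ratio, compensating the counting processes under $\hat Q$, and collapsing the integrand to $(G^v\log v)(i)-(Gv)(i)/v(i)$---is exactly the argument that lives behind that citation, so you have made self-contained what the paper outsources. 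Your explicit flagging of the optional-stopping passage from deterministic $t$ to $\tau_A$ is an honest point the paper does not address at all.
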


\begin{proof}
We first show $(i)$. By assumption we have $\pi(i)G_{ij} = \pi(j)G_{ji}$. Now, let $\pi^v$ be such that $\pi^v(i)G^v_{ij} = \pi^v(j)G^v_{ji}$. We will show that $\pi^v$ has the proposed form:
$$\pi^v(i)G^v_{ij} = \frac{v(j)}{v(i)}  \frac{\pi^v(i)}{\pi(i)}\pi(i)G_{ij} =  \frac{v(j)}{v(i)} \frac{\pi^v(i)}{\pi(i)}\pi(j)G_{ji} =  \frac{v^2(j)}{v^2(i)} \frac{\pi(j)}{\pi(i)}\frac{\pi^v(i)}{\pi^v(j)} \pi^v(j)G^v_{ji}$$
But since $\pi^v(i)G^v_{ij} = \pi^v(j)G^v_{ji}$, we must have
$$\frac{\pi^v(j)}{\pi(j)v^2(j)} = \frac{\pi^v(i)}{\pi(i)v^2(i)} \quad\forall i\neq j.$$
This can only be true if the quantity $Z_v^{-1} =  \frac{\pi^v(i)}{\pi(i)v^2(i)}$ is independent of $i$. This gives $\pi^v(j) = Z_v^{-1} v^2(j)\pi(j)$ as desired. The constant $Z_v$ is uniquely determined by the requirement that $\pi^v$ be normalized. Finally, from reversibility it follows directly that $\pi^v$ is also a stationary distribution of $G^v$.

To show $(ii)$, note that the running cost $k^v(i)$ can be written as
\begin{equation} \label{kv}
 k^v(i) = \eps\sum_{j\neq i} G_{ij}\left\{ \frac{v(j)}{v(i)}\left[\log \frac{v(j)}{v(i)} - 1\right] + 1\right\},
\end{equation}
which is the KL divergence between $\hat Q$ and $\hat P$ (see \cite[Sec.~3.1.4]{Pra1996}). The absolute continuity between $\hat Q$ and $\hat P$ simply follows from the fact that $v$ in the definition of $G^{v}$ was required to be component-wise strictly positive.     
\end{proof}

\begin{remark}\label{rem_props}

To reveal further similarities between the stochastic control problem (\ref{SDE})--(\ref{J})  and the corresponding Markov decision problem, note that the quadratic penalization term in (\ref{J}) equals the KL divergence between the reference measure $P$ of the uncontrolled diffusion  (\ref{sde_sans}) and the corresponding probability measure $Q$ that is induced by replacing $B_{t}$ in (\ref{sde_sans}) by 
\[
B^{u}_{t} = B_{t} + \sqrt{\frac{1}{2\eps}}\int_{0}^{t} u_{s}dt\,,
\]     
as can be shown using Girsanov's theorem \cite[Thm.~8.6.8]{oksendal2003}. In other words, it holds that (cf.~\cite{Hartmann2012,HaEtal13a})
\[
\bE_Q\left[\frac{1}{4}\int_0^{\tau_A} |u_{s}|^{2}ds\right]  = \eps\int \log\frac{dQ}{dP} dQ
\]
\end{remark}

\section{Discretization: Galerkin projection point of view}

In this section we will develop a discretization for optimal control problems of the type discussed in Section \ref{sec2}. The discretization will approximate the continuous control problem with a control problem for a Markov jump process on finite state space.

Because of the nonlinearity of the problem, a general theory for discretizing continuous optimal control problems is unavailable. However, we saw in Section \ref{sec2} that for the control problems we are interested in, a logarithmic transform to a linear PDE is available. For linear PDEs, discretization theory in terms of Galerkin projections onto finite-dimensional subspaces of the PDE solution space exists. Our strategy will therefore be the one indicated in Figure \ref{diagram}.

\begin{figure}[ht]
 \centering
 \includegraphics[width=0.7\textwidth]{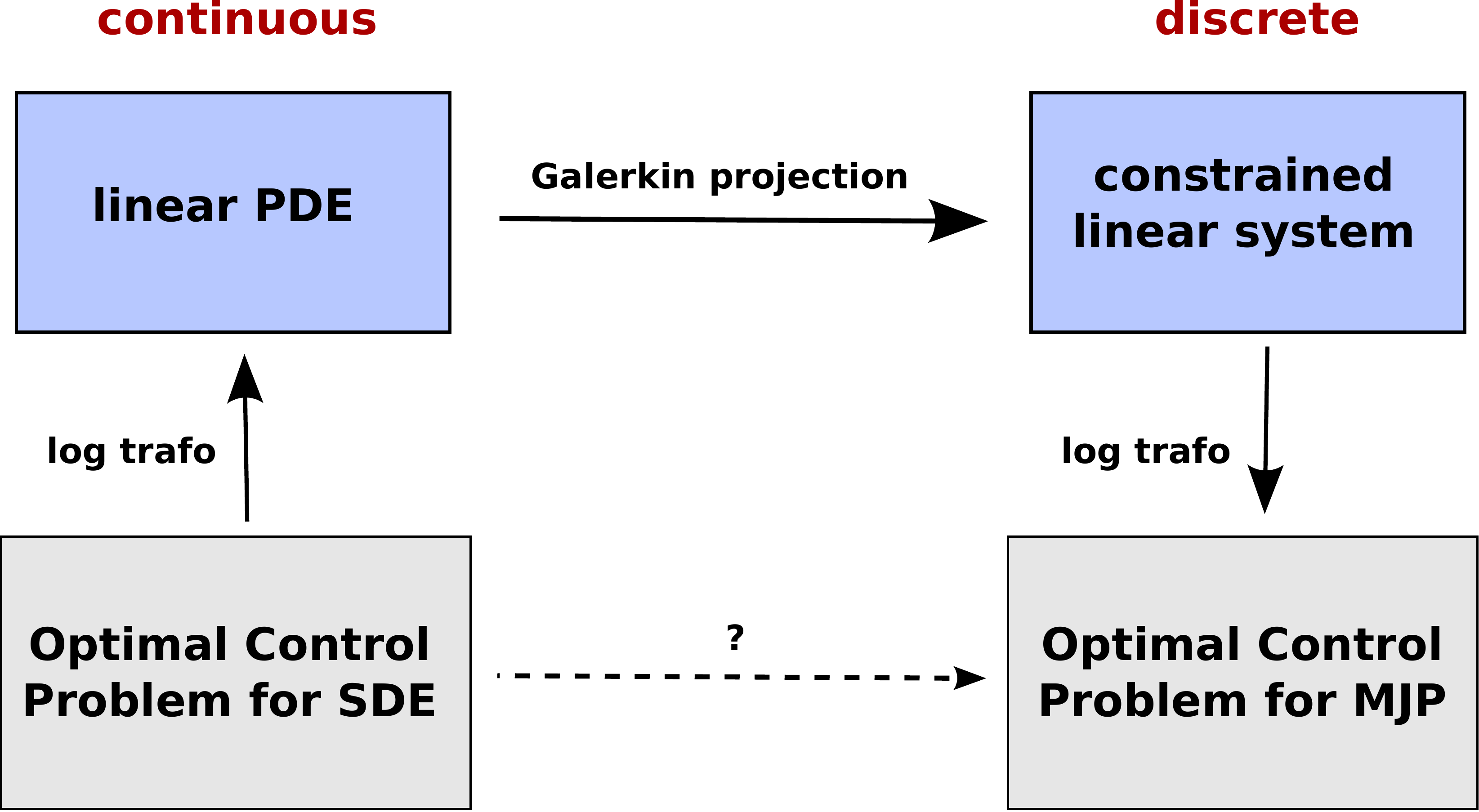}
 \caption{Discretization of continuous control problems via a logarithmic transform.}
 \label{diagram}
\end{figure}

In the first part of this section, we will develop the Galerkin projection for general subspaces and obtain some control of the discretization error. To refine this control, we specify the subspace $D$ we project onto. Specifically, we develop two possible choices for $D$ inspired by MSMs. The first choice specifies $D$ as the space of step functions on a full partition of state space, and if that full partition is chosen as a lattice with spacing $h$, then, as expected, the discretization error vanishes for $h\rightarrow 0$. The second choice uses a core partition of state space, where the cores are the metastable regions of the uncontrolled dynamics. We will prove a novel error bound which gives us detailed control over the discretization error even if very few basis functions are used.

In the second part of this section, we will develop the stochastic interpretation of the resulting matrix equation as the backward Kolmogorov equation of a MJP, which enables us to identify the discrete control problem for the MJP, as it was developed in Section \ref{sec2}. We will study the resulting discrete control problem for the two choices of $D$ specified earlier. In the full partition case, we will establish a connection to the finite volume approximation discussed in \cite{Latorre2011}, and we will show that to first order in $h$, our discrete control problem coincides with the Markov chain approximation constructed by Kushner \cite{Kushner1992}, confirming that the control problem itself converges to the continuous problem for $h\rightarrow 0$. In the incomplete partition case, we will make a connection to Transition Path Theory \cite{Vanden-Eijnden2006} and core set MSMs \cite{Schuette2011}.

\subsection{Galerkin projection of the Dirichlet problem}\label{sec_Galerkin}

As discussed above, we consider the boundary value problem
\begin{equation}\label{eq_bvp1}
\begin{aligned}
\left(L - \eps^{-1}f\right) \phi(x)  = 0\,, & \quad x\in A^{c}\\ \phi(x) = 1\,, & \quad x\in A\\ 
\nu \cdot\nabla\phi(x)= 0\,, & \quad x\in\partial\cS\,,
\end{aligned}
\end{equation}
with $L$ and $f$ as given above and $A^c = \cS\setminus A$. Following standard references we construct a Galerkin projection of (\ref{eq_bvp1}), see e.g. \cite{braack2012}. For this purpose, we introduce the $H^1$-norm $\|\phi\|_{H^1} =  \|\nabla u\|^2_\mu + \|u\|^2_\mu$, the Hilbert space $V = \{\psi\in L^2(\cS,\mu), \|\psi\|_{H^1} <\infty\}$ and the symmetric and positive bilinear form

$$\mathcal{B}:V\times V \rightarrow \R, \quad \mathcal{B}(\phi,\psi) = \eps^{-1}\langle f \phi, \psi\rangle_{\mu} + \eps\langle \nabla \phi, \nabla \psi\rangle_{\mu}.$$

Now if $\phi$ is a solution of (\ref{eq_bvp1}), then it also solves the weak problem

\begin{equation}
\cB(\phi,\psi) = 0 \quad \forall \psi\in V\,.
\label{eq_bvp2}
\end{equation}

A {\em Galerkin solution} $\hat\phi$ is any function satisfying

\begin{equation}
\cB(\hat\phi,\hat\psi) = 0 \quad \forall \hat\psi\in D,
\label{eq_bvp2a}
\end{equation}

with a predefined finite dimensional subspace $D\subset V$ that is adapted to the boundary conditions. In particular, we may choose basis functions $\chi_1,\ldots, \chi_{n+1}$ with the following properties:

\begin{itemize}
 \item[(S1)] The $\chi_i$ form a partition of unity, that is $\sum_{i=1}^{n+1}\chi_i = \mathds{1}$.
 \item[(S2)] The $\chi_i$ are adapted to the boundary conditions in (\ref{eq_bvp1}), that is $\nu\cdot\nabla\chi_i|_{\partial\cS} = 0$, $\chi_{n+1}|_A = 1$ and $\chi_i|_A = 0$ for $i\in \{1,\ldots,n\}$.
\end{itemize}

Then all elements of $D:= \chi_{n+1} \oplus D_0$ with $D_0 = \mbox{lin}\{\chi_1,\ldots,\chi_n\}$ will satisfy the boundary conditions in \ref{eq_bvp1}. Now define the matrices
\[
F_{ij} = \frac{\langle\chi_i,f \chi_j\rangle_\mu}{\langle\chi_i, \mathds{1}\rangle_\mu}, \qquad K_{ij} = \frac{\langle\chi_i,L\chi_j\rangle_\mu}{\langle\chi_i, \mathds{1}\rangle_\mu}\,.
\]
Then (\ref{eq_bvp2a}) takes the form of a matrix equation for the coefficients $\hat\phi = \sum_i \hat\phi_i\chi_i$:

\begin{equation}\label{eq_FK_disc}
\begin{aligned}
\sum_{j=1}^{n+1} \left(K_{ij} - \eps^{-1} F_{ij}\right) \hat{\phi}_j  & = 0\,, \quad i\in \{1,\ldots,n\}\\
\hat{\phi}_{n+1} & = 1\,,
\end{aligned}
\end{equation}

which is the discretization of (\ref{eq_bvp1}). 

\paragraph{Discretization error}

In order control the discretization error of the Galerkin method, we choose a norm $\|\cdot\|$ on $V$ and introduce the two error measures:

\begin{enumerate}
\item The {\em Galerkin error} $\bm{\varepsilon} = \|\phi-\hat\phi\|$, i.e. the difference between original and Galerkin solution measured in $\|\cdot \|$.
\item The {\em best approximation error} $\bm{\varepsilon_0} = \inf_{\hat\psi\in D} \|\phi-\hat\psi\|$, i.e. the minimal difference between the solution $\phi$ and any element $\hat\psi \in D$.
\end{enumerate}

In order to obtain full control over the discretization error, we need to obtain bounds on $\bm{\varepsilon}$, and we will do so by first obtaining a bound on the performance $\bm{p} := \bm{\varepsilon}/\bm{\varepsilon_0}$ and then a bound on $\bm{\varepsilon_0}$. The latter will depend on the choice of subspace $D$. For the former, standard estimates assume the following $\|\cdot\|$-dependent properties of $A$:

\begin{enumerate}
\item[(i)] Boundedness: $\cB(\phi,\psi) \leq \alpha_1 \|\phi\| \|\psi\|$ for some $\alpha_1 > 0$
\item[(ii)] Ellipticity: for all $\phi\in V$ holds $\cB(\phi,\phi) \geq \alpha_2 \|\phi\|$ for some $\alpha_2 > 0$.
\end{enumerate}

If both (i) and (ii) hold, Céa's lemma states that $\bm{p} \leq \frac{\alpha_1}{\alpha_2}$, see e.g. \cite{braack2012}. For the {\em energy norm} $\|\phi\|^ 2_B := \cB(\phi,\phi)$ we have $\alpha_1 = \alpha_2 = 1$ and therefore $\bm{p}=1$, thus the Galerkin solution $\hat \phi$ is the best-approximation to $\phi$ in the energy norm.

The next two Lemmas give a bound on $\bm{p}$ if errors are measured in the $L^2$-norm $\|\cdot\|_\mu$. In this case, $\cB(\cdot,\cdot)$ is still elliptic but possibly unbounded. Later in this section, we will give examples for the choice of $D$ and obtain bounds on $\bm{\varepsilon_0}$.

\begin{theorem}\label{Lemma_p}
Let $\cB$ be elliptic. If $Q$ is the orthogonal projection (with respect to $\|\cdot\|_\mu$) onto $D_0$, we have

$$\bm{p}^2 = \left(\frac{\bm{\varepsilon}}{\bm{\varepsilon_0}}\right)^2 \leq 1 + \frac{1}{\alpha_2^2}\|QBQ^{\perp}\|^2,$$

where $Q^{\perp} = 1-Q$, $B: V\rightarrow V$ is the linear operator $\phi \mapsto \cB(\cdot,\phi)$, and the operator norm is defined as $\|B\| = \sup_{\|x\|_\mu=1} \|Bx\|_\mu$.
\end{theorem}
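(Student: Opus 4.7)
The plan is to combine Galerkin orthogonality with the self-adjointness of the projection $Q$ and ellipticity of $\cB$, after first using a Pythagorean decomposition to reduce everything to bounding $\|Qe\|_\mu$ where $e = \phi - \hat\phi$.

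The preparatory step is to clean up how the boundary data enter. Since both $\phi$ and the Galerkin solution $\hat\phi$ equal $1$ on $A$, while $\chi_{n+1}|_A = 1$ and $\chi_i|_A = 0$ for $i\leq n$, the error $e = \phi-\hat\phi$ and the displacement $\hat\phi - \chi_{n+1}$ both lie in $D_0$'s ambient space of functions vanishing on $A$. In particular the best $L^2(\mu)$-approximation of $\phi$ from the affine set $D$ is $\chi_{n+1}+Q(\phi-\chi_{n+1})$, so $\bm{\varepsilon_0} = \|Q^\perp(\phi-\chi_{n+1})\|_\mu$. Because $\hat\phi-\chi_{n+1}\in D_0$, we have $Q^\perp(\hat\phi-\chi_{n+1})=0$, hence $Q^\perp e = Q^\perp(\phi-\chi_{n+1})$ and therefore $\|Q^\perp e\|_\mu = \bm{\varepsilon_0}$. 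Since $Q$ is an orthogonal projection, $\bm{\varepsilon}^2 = \|Qe\|_\mu^2 + \|Q^\perp e\|_\mu^2 = \|Qe\|_\mu^2 + \bm{\varepsilon_0}^2$, and the problem reduces to bounding $\|Qe\|_\mu$.

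Next I would use that $Qe\in D_0$ together with Galerkin orthogonality $\cB(e,\hat\psi)=0$ for all $\hat\psi\in D_0$. This gives $\cB(e,Qe)=0$, so by bilinearity $\cB(Qe,Qe) = -\cB(Q^\perp e, Qe)$. Now I use the operator $B$ defined by $\cB(\psi,\phi)=\langle \psi, B\phi\rangle_\mu$ (which is well-defined on the relevant subspace by Riesz; for the bilinear form at hand one in fact has $B = \eps^{-1}f - L$ via integration by parts with the Neumann condition). Then $\cB(Q^\perp e, Qe) = \langle Q^\perp e, B Qe\rangle_\mu$; using symmetry of $\cB$ (or alternatively manipulating the adjoint) I rewrite the same quantity as $\langle BQ^\perp e, Qe\rangle_\mu$. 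Because $Qe\in D_0$ and $Q$ is self-adjoint, inserting $Q$ in the left slot is free, $\langle BQ^\perp e, Qe\rangle_\mu = \langle QBQ^\perp e, Qe\rangle_\mu$. Applying Cauchy--Schwarz and the operator norm bound (using $Q^\perp e = Q^\perp(Q^\perp e)$ to rewrite the argument) yields
\[
|\cB(Q^\perp e, Qe)| \;\leq\; \|QBQ^\perp\|\,\|Q^\perp e\|_\mu\,\|Qe\|_\mu \;=\; \|QBQ^\perp\|\,\bm{\varepsilon_0}\,\|Qe\|_\mu.
\]

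Ellipticity then supplies $\alpha_2\|Qe\|_\mu^2 \leq \cB(Qe,Qe)$, so combining with the previous inequality gives $\|Qe\|_\mu \leq \alpha_2^{-1}\|QBQ^\perp\|\,\bm{\varepsilon_0}$. Inserting this into the Pythagorean identity produces exactly
\[
\bm{\varepsilon}^2 \;\leq\; \Bigl(1+\frac{\|QBQ^\perp\|^2}{\alpha_2^2}\Bigr)\bm{\varepsilon_0}^2,
\]
which is the asserted bound on $\bm{p}^2$.

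I expect the main obstacle to be purely bookkeeping rather than analytic: keeping straight which projection acts on which slot so that the operator that appears in the final estimate is truly $QBQ^\perp$ and not, say, $Q^\perp B Q$ or the unprojected $B$. The two moves that do the real work are (a) using self-adjointness of $Q$ to insert it for free next to $Qe$, and (b) using idempotency $Q^\perp Q^\perp = Q^\perp$ to absorb $Q^\perp$ into the argument so that the operator norm hits $\bm{\varepsilon_0}$ rather than $\bm{\varepsilon}$; without (b) one would only get a fixed-point style bound from which $\bm{p}$ could not be isolated cleanly. The boundary-condition bookkeeping at the very beginning (identifying $\bm{\varepsilon_0}$ with $\|Q^\perp(\phi-\chi_{n+1})\|_\mu$ and noting $Q^\perp e$ is independent of $\hat\phi$) is the other place where a misstep would corrupt the estimate.
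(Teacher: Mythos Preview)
Your proof is correct and follows the same overall architecture as the paper: Pythagorean decomposition $\bm{\varepsilon}^2 = \|Qe\|_\mu^2 + \bm{\varepsilon_0}^2$, followed by the bound $\|Qe\|_\mu \leq \alpha_2^{-1}\|QBQ^\perp\|\,\bm{\varepsilon_0}$. The execution of that second step, however, is genuinely different and worth noting.

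The paper passes to coordinates on $D_0$: it derives the matrix identity $\hat B(\hat\phi^*-\hat\phi)=c$ with $c_i=-\cB(\phi_\perp,\chi_i)$, introduces the square root $\hat S$ of the symmetric positive matrix $\hat M^{-1}\hat B$, applies ellipticity \emph{twice} to obtain $\langle v,v\rangle_M \leq \alpha_2^{-2}\langle \hat M^{-1}\hat B v,\hat M^{-1}\hat B v\rangle_M$, and then unravels $\langle \hat M^{-1}c,\hat M^{-1}c\rangle_M = \|QB\phi_\perp\|_\mu^2$. Your argument stays coordinate-free: Galerkin orthogonality $\cB(e,Qe)=0$ gives $\cB(Qe,Qe)=-\cB(Q^\perp e,Qe)$ directly, and a single application of ellipticity together with Cauchy--Schwarz finishes. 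This is shorter, avoids the square-root construction, and makes transparent exactly where the operator $QBQ^\perp$ enters. Your treatment of the affine boundary data via $\chi_{n+1}$ is also a bit more explicit than the paper's, which silently identifies $Q\phi$ with the best approximant in the affine set $D$. The paper's matrix route, on the other hand, has the minor advantage of making the finite-dimensionality of the relevant objects manifest, which feeds naturally into the subsequent Lemma bounding $\|QBQ^\perp\|$ in terms of the mass matrix $\hat M$.
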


{\bf Proof.} In Appendix \ref{app_lemma_p}.

Note that $\|Q B Q^\perp\| \leq \|QB\|$ is always finite even though $B$ is possibly unbounded since $Q$ is the projection onto a finite-dimensional subspace. The bottom line of Theorem \ref{Lemma_p} is that if $B$ leaves the subspace $D$ almost invariant, then $\hat\phi$ is almost the best-approximation of $\phi$ in $\|\cdot\|_\mu$. The following Lemma gives a more detailed description. In the following, we will write $\|\cdot\| = \|\cdot \|_\mu$ for convenience.

\begin{lemma}\label{Qperp}
Let

$$\delta_L := \max_k \|Q^\perp L \chi_k\|, \qquad \delta_f := \max_k \|Q^\perp \eps^{-1}f \chi_k\|$$

be the maximal projection error of the images of the $\chi_k$'s under $L$ and $f$ respectively. Then

$$\|QBQ^\perp\| = \|Q^\perp BQ\| \leq (\delta_L + \delta_f) \sqrt{\frac{n}{m}},$$

where $m$ is the smallest eigenvalue of $\hat M$.
\end{lemma}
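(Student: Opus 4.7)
The plan is to reduce the bound to a coefficient estimate, using that the operator $B$ is self-adjoint on $L^2(\cS,\mu)$ and that $D_0$ is finite-dimensional. First I would record that the symmetry of $\cB$ yields $B = B^*$, whence $\|QBQ^\perp\| = \|(Q^\perp B Q)^*\| = \|Q^\perp B Q\|$; this disposes of the equality and lets me focus on $Q^\perp B Q$. Integration by parts against $d\mu = e^{-V/\eps}\,dx$ rewrites $\eps\langle\nabla\phi,\nabla\psi\rangle_\mu = -\langle L\phi,\psi\rangle_\mu$, so $\cB(\phi,\psi) = \langle(\eps^{-1}f - L)\phi,\psi\rangle_\mu$, i.e.\ $B = \eps^{-1}f - L$ as a (possibly unbounded) operator on $V$.

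Next I would test the norm on an arbitrary $v\in V$ with $\|v\|=1$. Since $Qv\in D_0$, write $Qv = \sum_{k=1}^n c_k\chi_k$. By linearity
\begin{equation*}
Q^\perp B Q v \;=\; \sum_{k=1}^n c_k\, Q^\perp(\eps^{-1}f - L)\chi_k,
\end{equation*}
so the triangle inequality together with the definitions of $\delta_L$ and $\delta_f$ gives
\begin{equation*}
\|Q^\perp B Q v\| \;\le\; \sum_{k=1}^n |c_k|\bigl(\|Q^\perp L\chi_k\| + \|Q^\perp \eps^{-1}f\chi_k\|\bigr) \;\le\; (\delta_L+\delta_f)\sum_{k=1}^n |c_k|.
\end{equation*}
Cauchy--Schwarz on the $\ell^1$--$\ell^2$ sum then bounds $\sum_k|c_k|\le \sqrt{n}\,\|c\|_2$ where $c=(c_1,\dots,c_n)^T$.

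The last ingredient is converting $\|c\|_2$ into something controlled by $\|v\|$. Here the mass matrix $\hat M_{ij}=\langle\chi_i,\chi_j\rangle_\mu$ enters: $\|Qv\|_\mu^2 = c^T\hat M c \ge m\,\|c\|_2^2$, where $m$ is the smallest eigenvalue of $\hat M$. Combining with $\|Qv\|\le\|v\|=1$ yields $\|c\|_2\le 1/\sqrt{m}$, and hence $\|Q^\perp B Q v\|\le(\delta_L+\delta_f)\sqrt{n/m}$. Taking the supremum over $\|v\|=1$ concludes the proof.

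The only non-routine point, and thus the main obstacle, is the step from the functional norm $\|Qv\|_\mu$ to the coefficient $\ell^2$-norm $\|c\|_2$: one must be careful that the basis $\{\chi_k\}$ is linearly independent (so that $\hat M$ is strictly positive definite and $m>0$), and that $\hat M$ is interpreted as the Gram matrix of the $\chi_k$ with respect to $\langle\cdot,\cdot\rangle_\mu$ rather than the normalized objects appearing in $F_{ij}$ and $K_{ij}$. Everything else is standard triangle inequality and Cauchy--Schwarz bookkeeping.
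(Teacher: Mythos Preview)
Your proposal is correct and follows essentially the same route as the paper: self-adjointness of $B$ for the equality, expansion of $Qv$ in the basis $\{\chi_k\}$, triangle inequality to introduce $\delta_L+\delta_f$, the $\ell^1$--$\ell^2$ estimate $\sum_k|c_k|\le\sqrt{n}\,\|c\|_2$, and the Gram-matrix bound $c^T\hat M c\ge m\|c\|_2^2$ to pass from coefficients back to $\|Qv\|_\mu\le\|v\|_\mu$. The only cosmetic difference is that the paper first splits $\|Q^\perp BQ\|\le\|Q^\perp LQ\|+\|Q^\perp\eps^{-1}fQ\|$ at the operator level and then bounds each term separately, whereas you split at the level of each $\chi_k$; the resulting estimate is identical.
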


\begin{proof}
The first statement is true since $A$ is essentially self-adjoint. For the second statement, first of all

$$\|Q^\perp BQ\| = \|Q^\perp (\eps^{-1}f-L)Q\| \leq \|Q^\perp \eps^{-1}f Q\| + \|Q^\perp LQ\|$$

holds from the triangle inequality. We now bound the term involving $L$. Notice that for $\hat\phi = \sum_i \hat \phi_i \chi_i \in D$:

$$\|Q^\perp L \hat\phi\| = \|\sum_i \hat \phi_i Q^\perp L\chi_i\| \leq \delta_L \sum_i |\hat \phi_i| = \delta_L \|\hat \phi\|_1.$$

Then, with $\hat M_{ij} :=\langle \chi_i,\chi_j\rangle_\mu$:

\begin{equation*}
\|Q^\perp L Q\| = \sup_{\phi = \phi_{||} + \phi_\perp \in V}\frac{\|Q^\perp L \phi_{||}\|}{\|\phi\|}\leq \sup_{\phi_{||} \in D}\frac{\|Q^\perp L \phi_{||}\|}{\|\phi_{||}\|} \leq \delta_L\sup_{\hat \phi\in\R^n}  \frac{\|\hat \phi\|_1}{\sqrt{\langle \hat \phi,\hat \phi\rangle_M}}
\end{equation*}

A similar result holds for  the term involving $f$. The statement now follows from a standard equivalence between finite-dimensional norms, $\|\hat \phi\|_1 \leq \sqrt{n}\|\hat \phi\|_2$, and the fact that $\hat M$ is symmetric, which implies that $\langle \hat \phi,\hat \phi\rangle_M = \hat \phi^T \hat M\hat \phi \geq m \hat \phi^T\hat \phi = m \|\hat \phi\|_2^2$. 
\end{proof}

To summarise, Theorem \ref{Lemma_p} and Lemma \ref{Qperp} give us a formula for the projection performance $p$ which states that 

$$\bm{p}^2 \leq 1 + \frac{n}{m}\frac{(\delta_L+\delta_f)^2}{\alpha^2_2}.$$

How large or small $\delta_f$ is will depend on the behaviour of $f$, if i.e. $f = \text{const}$ then $\delta_f=0$. Both $\delta_f$ and $\delta_L$ are always finite even though $L$ is possibly unbounded.

We comment on the best-approximation error $\bm{\varepsilon_0}$ for two choices for the subspace $D$ which will reappear again later in the paper.

\paragraph{Full partition}

Let $\cS$ be fully partitioned into disjoint sets $A_1,\ldots,A_{n+1}$ with centres $x_1,\ldots,x_{n+1}$ and such that $A_{n+1} := A$, and define $\chi_i := \chi_{A_i}$. These $\chi_i$ satisfy the assumptions (S1) and (S2) discussed in Section \ref{sec_Galerkin}. By definition we can bound $\bm{\varepsilon_0}$ by any interpolation $I\phi\in D$ of the solution $\phi$:

$$\bm{\varepsilon_0} \leq \|\phi - I\phi\|_\mu.$$

As interpolation, we choose $I\phi(x) = \sum_i c_i \chi_i(x)$ where $c_i = \frac{1}{\|\chi_i\|_1}\int_{A_i}\phi(x)d\mu$. If the $A_i$ are cubes of length $h$ and $\phi$ is twice continuous differentiable, then using standard techniques one can show that $\bm{\varepsilon_0}$ is linear in $h$, see e.g. \cite{braack2012}.

\paragraph{Incomplete partition}

Suppose the potential $V(x)$ has $n+1$ deep minima $x_1,\ldots,x_{n+1}$. Let $C_1,\ldots,C_{n+1}$ be convex 'core' sets around $x_1,\ldots,x_{n+1}$ and such that $A = C_{n+1}$. We write $C =  \cup_{i=1}^{n+1} C_i$ and $T=\cS\setminus C$ and introduce $\tau_C = \inf \{t\geq 0: X_t\in C\}$. We take $\chi_i$ to be the committor function associated to the set $C_i$, that is 

\begin{equation}
\chi_i(x) = \bP(X_{\tau_C} \in C_i| X_0 = x).
\label{eq_committor}
\end{equation}

These $\chi_i$ satisfy the assumptions (S1) and (S2). Since we do not have an order parameter $h$ controlling the resolution of the discretization, standard PDE techniques for bounding $\bm{\varepsilon_0}$ fail. Indeed, typically we will have very few basis functions compared to a grid-like discretization. The following Lemma gives a bound on $\bm{\varepsilon_0}$.

\begin{theorem}\label{lemma_best}
Let $Q$ be the orthogonal projection onto the subspace $D$ spanned by the committor functions (\ref{eq_committor}), and let $\phi$ be the solution of (\ref{eq_bvp1}). Then we have
 $$\bm{\varepsilon_0} = \|Q^\perp \phi\|_\mu \leq \|P^\perp \phi\|_\mu + \mu(T)^{1/2}\left[\kappa \|f\|_\infty + 2 \|P^\perp \phi\|_\infty \right]$$
 
 where $\|\cdot\| = \|\cdot\|_\mu$, $\kappa = \sup_{x\in T} \bE_x[\tau_{\cS\setminus T}]$, and $P$ is the orthogonal projection onto the subspace $V_c = \{v\in L^2(\cS,\mu), v=const \;\mbox{on every}\; C_i \} \subset L^2(\cS,\mu)$.
\end{theorem}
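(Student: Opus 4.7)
The plan is to use that $Q$ is the orthogonal projection onto $D$ in $L^2(\cS,\mu)$, so that
\[
\bm{\varepsilon_0} \;=\; \|Q^\perp\phi\|_\mu \;=\; \inf_{\hat\psi\in D}\|\phi-\hat\psi\|_\mu,
\]
and it suffices to exhibit a single good candidate $\hat\psi\in D$. The natural one is built from $P\phi$: since $V_c$ only constrains functions on the cores, $P\phi$ equals $\phi$ on $T$ and the $\mu$-average $\bar\phi_j := \mu(C_j)^{-1}\!\int_{C_j}\phi\,d\mu$ on each $C_j$. I would set
\[
\hat\psi \;:=\; \sum_{j=1}^{n+1}\bar\phi_j\,\chi_j \;\in\; D.
\]
By the probabilistic interpretation of the committors, $\chi_j(x)=\bP_x(X_{\tau_C}\in C_j)$ with $\sum_j\chi_j=\mathds{1}$, this candidate coincides on $T$ with the harmonic extension $\hat\psi(x)=\bE_x[(P\phi)(X_{\tau_C})]$.

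Next, since $P^\perp\phi$ is supported on $C$, I would split
\[
\|\phi-\hat\psi\|_\mu^2 \;=\; \|\phi-\hat\psi\|_{L^2(C,\mu)}^2 \,+\, \|\phi-\hat\psi\|_{L^2(T,\mu)}^2.
\]
On each core $\hat\psi|_{C_j}=\bar\phi_j=(P\phi)|_{C_j}$, so $\phi-\hat\psi=P^\perp\phi$ on $C$ and the first piece equals $\|P^\perp\phi\|_\mu^2$ exactly. For the $T$-piece I would invoke Feynman--Kac for the operator $L-\eps^{-1}f$ stopped at $\tau_C$,
\[
\phi(x) \;=\; \bE_x\!\left[\exp\!\Big(-\tfrac{1}{\eps}\!\int_0^{\tau_C}\! f(X_s)\,ds\Big)\,\phi(X_{\tau_C})\right],
\]
and subtract $\hat\psi(x)=\bE_x[(P\phi)(X_{\tau_C})]$. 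Inserting $P\phi(X_{\tau_C})=\phi(X_{\tau_C})-P^\perp\phi(X_{\tau_C})$ decomposes $\phi-\hat\psi$ into two terms: a Feynman--Kac tail, bounded via $|e^{-y}-1|\le y$ together with $\|\phi\|_\infty\le 1$ and $\bE_x[\tau_C]\le\kappa$ to yield a contribution of order $\kappa\|f\|_\infty$ (modulo $\eps^{-1}$, which can be absorbed into the constants); and an exit-value projection error controlled pointwise by $\|P^\perp\phi\|_\infty$. Taking sup over $x\in T$ and using $\|\cdot\|_{L^2(T,\mu)}\le\mu(T)^{1/2}\|\cdot\|_{L^\infty(T)}$, then combining the two pieces through $\sqrt{a^2+b^2}\le a+b$, assembles the claimed bound.

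The main obstacle is the $T$-estimate: one must couple the random exit location $X_{\tau_C}\in\partial C$ of the diffusion with the deterministic projection error $P^\perp\phi$, while simultaneously controlling the multiplicative Feynman--Kac factor over the (potentially long) metastable crossing time of $T$. This is precisely where both $\kappa$ and $\|P^\perp\phi\|_\infty$ enter, and it is this step that forces the use of $\|\cdot\|_\infty$ rather than just $\|\cdot\|_\mu$ in the boundary contribution. Everything else -- the orthogonal decomposition on $C\cup T$, the triangle inequalities, the identity $\hat\psi=P\phi$ on $C$, and the harmonicity of $\hat\psi$ on $T$ expressed through the committors -- is bookkeeping.
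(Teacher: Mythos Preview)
Your plan is correct and yields the bound with the same (in fact slightly better) constants. The interpolant $\hat\psi=\sum_j\bar\phi_j\chi_j$ you pick is exactly the paper's $q$, and the decomposition $\|\phi-\hat\psi\|_\mu^2=\|P^\perp\phi\|_\mu^2+\|\phi-\hat\psi\|_{L^2(T,\mu)}^2$ is the same starting point. The difference is entirely in how the $T$-piece is controlled.

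You go the probabilistic route: write $\phi(x)=\bE_x\big[e^{-\eps^{-1}\!\int_0^{\tau_C}\!f}\,\phi(X_{\tau_C})\big]$ by Feynman--Kac stopped at $\tau_C$, subtract the harmonic representation $\hat\psi(x)=\bE_x[\bar\phi_J]$, and bound the two resulting pieces pointwise via $1-e^{-y}\le y$, $\|\phi\|_\infty\le 1$, $\bE_x[\tau_C]\le\kappa$, and $|\phi(X_{\tau_C})-\bar\phi_J|\le\|P^\perp\phi\|_\infty$. The paper instead works on the PDE side: it sets $e=P\phi-q$, derives $\Theta L\Theta\,e=\Theta f\phi-\Theta LP^\perp\phi$ (with $\Theta$ the projection onto functions vanishing on $C$), inverts the Dirichlet generator $R=\Theta L\Theta$ on $E_\Theta$, and bounds $\|R^{-1}\|\le\kappa$ by Varadhan's principal-eigenvalue estimate; the boundary term is then handled via an auxiliary harmonic $w$ and the maximum principle. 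Your argument is more elementary and direct, and in fact gives the factor $1$ rather than $2$ in front of $\|P^\perp\phi\|_\infty$. The paper's argument, on the other hand, stays within the operator-theoretic MSM framework of Sarich and makes explicit the role of the spectral gap of the Dirichlet operator on $T$. Your remark about the stray $\eps^{-1}$ is justified: the paper's proof writes $L\phi=f\phi$ where it should be $L\phi=\eps^{-1}f\phi$, so the first term really carries $\eps^{-1}\kappa\|f\|_\infty$ in both approaches; it cannot be ``absorbed'' but is simply a missing factor in the stated bound.
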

\begin{proof}
 In Appendix \ref{app_best}
\end{proof}

In theorem \ref{lemma_best}, $\kappa$ is the maximum expected time of hitting the metastable set from outside (which is short). Note further that $P^\perp \phi = 0$ on $T$. The errors $\|P^\perp \phi\|_\mu$ and $\|P^\perp \phi\|_\infty$ measure how constant the solution $\phi$ is on the core sets. Theorem \ref{lemma_best} gives us excellent control over $\bm{\varepsilon_0}$, and together with theorem \ref{Lemma_p} we have full control over the discretization error $\eps$ for the case of incomplete partitions. These error bounds are along the lines of MSM projection error bounds \cite{Sarich2010}, \cite{Djurdjevac2012}, and to the best of the authors' knowledge they are new.

\begin{remark}
It would be nice to have an error estimate also for the value function. In general such an estimate is difficult to get, because of the nonlinear logarithmic transformation   $W=-\eps\log\phi$ involved. However we know that $\phi$ and its discrete approximation are both uniformly bounded and bounded away from zero. Hence the logarithmic transformation is uniformly Lipschitz continuous on its domain, which implies that the $L^{2}$ error bounds holds for the value function with an additional prefactor given by the Lipschitz constant squared; for a related argument see \cite{Zhang2013}  
\end{remark}

\subsection{Interpretation in terms of a Markov decision problem}\label{sec_interpretation}

We derive an interpretation of the discretized equation (\ref{eq_FK_disc}) in terms of a MJP. We introduce the diagonal matrix $\Lambda$ with entries $\Lambda_{ii} = \sum_j F_{ij}$ (zero otherwise) and the full matrix $G=K -\eps^{-1}(F-\Lambda)$, and rearrange (\ref{eq_FK_disc}) as follows:
\begin{equation} \label{eq_stoch2}
\begin{aligned}
\sum_{j=1}^{n+1} \left(G_{ij} - \eps^{-1} \Lambda_{ij}\right) \hat{\phi}_j  & = 0\,, \quad i\in \{1,\ldots,n\}\\
\hat{\phi}_{n+1} & = 1\,,
\end{aligned}
\end{equation}

This equation can be given a stochastic interpretation. To this end let us introduce the vector $\pi\in\R^{n+1}$ with nonnegative entries $\pi_i = \langle \chi_i,\mathds{1}\rangle$ and notice that $\sum_i \pi_i = 1$ follows immediately from the fact that the basis functions $\chi_i$ form a partition of unity, i.e. $\sum_i \chi_i = \mathds{1}$. This implies that $\pi$ is a probability distribution on the discrete state space $\hat{\cS} = \{1,\ldots,n+1\}$.
We summarise properties of the matrices $K$, $F$ and $G$:
\begin{lemma} Let $K$, $G$, $F$ and $\pi$ be as above.
\label{lemma_mat}
\begin{enumerate}
\item[(i)] $K$ is a generator matrix (i.e. $K$ is a real-valued square matrix with row sum zero and positive off-diagonal entries) with stationary distribution $\pi$ that satisfies detailed balance 
\[
\pi_{i}K_{ij} = \pi_{j}K_{ji}\,,\quad i,j\in \hat{\cS}
\]
\item[(ii)] $F \geq 0$ (entry-wise) with $\pi_{i}F_{ij} = \pi_{j} F_{ji}$ for all  $i,j\in \hat{\cS}$.
\item[(iii)] $G$ has row sum zero and satisfies $\pi^T G = 0$ and $\pi_{i}G_{ij} = \pi_{j}G_{ji}$ for all $i,j\in \hat{\cS}$.
\item[(iv)] There exists a (possibly $\eps$-dependent) constant $0<C<\infty$ such that $G_{ij} \geq 0$ for all $i\neq j$ if $\|f\|_{\infty}\le C$. In this case equation (\ref{eq_stoch2}) admits a unique and strictly positive solution $\hat{\phi} > 0$.
\end{enumerate}
\end{lemma}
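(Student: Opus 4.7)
The four claims are proved in order, and most of them reduce to short algebraic manipulations built on three ingredients: $L\mathds{1}=0$, self-adjointness of $L$ on $L^{2}(\cS,\mu)$, and the partition-of-unity property $\sum_i\chi_i=\mathds{1}$. The plan is to verify (i)--(iv) in sequence, doing the real work only once, at the positivity step inside (i).

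For (i), the row sum vanishes because $\sum_j K_{ij}=\pi_i^{-1}\langle\chi_i,L\mathds{1}\rangle_\mu=0$; detailed balance $\pi_i K_{ij}=\pi_j K_{ji}$ is self-adjointness of $L$ rewritten, $\pi_i K_{ij}=\langle\chi_i,L\chi_j\rangle_\mu=\langle L\chi_i,\chi_j\rangle_\mu=\pi_j K_{ji}$, and stationarity $\pi^T K=0$ is a direct corollary. For off-diagonal positivity I would integrate by parts in the weighted space, giving
\[
\pi_i K_{ij} = -\eps\,\langle\nabla\chi_i,\nabla\chi_j\rangle_\mu, \qquad i\ne j,
\]
and then argue case-by-case: for indicator functions on a full partition this is the standard finite-volume flux across a common interface and is nonnegative; for committor basis functions the same formula applies with gradients supported on the transition region $T$, where $\nabla\chi_i\cdot\nabla\chi_j\le 0$ follows in the two-core setting immediately from $\chi_1+\chi_2=\mathds{1}$, and more generally from the fact that each committor is $L$-harmonic in $T$ and constant on the cores.

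Statements (ii) and (iii) are then essentially bookkeeping. $F_{ij}\ge 0$ is immediate from $f\ge 0$, $\chi_i\ge 0$, $\pi_i>0$, and symmetry $\pi_i F_{ij}=\pi_j F_{ji}$ is symmetry of the $\mu$-inner product. For (iii), the matrix $\Lambda-F$ was defined precisely so that each of its rows sums to zero, so combining with the same property of $K$ gives row-sum-zero for $G$. Using (ii), $\sum_i\pi_i F_{ij}=\sum_i\pi_j F_{ji}=\pi_j\Lambda_{jj}$, whence $\pi^T(F-\Lambda)=0$ and $\pi^T G=\pi^T K=0$. Detailed balance for $G$ reduces to that for $K$ and $F$, since $\Lambda_{ij}=0$ off the diagonal.

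For (iv), write $G_{ij}=K_{ij}-\eps^{-1}F_{ij}$ for $i\ne j$ and bound $F_{ij}\le\|f\|_\infty\langle\chi_i,\chi_j\rangle_\mu/\pi_i$; the choice
\[
C := \eps\,\min\{\pi_i K_{ij}/\langle\chi_i,\chi_j\rangle_\mu :\; i\ne j,\ \langle\chi_i,\chi_j\rangle_\mu>0\}
\]
forces $G_{ij}\ge 0$ whenever $\|f\|_\infty\le C$. Existence, uniqueness and strict positivity of $\hat\phi$ then follow from Lemma~\ref{lemma_backw_mjp} applied to the generator $G$ with nonnegative diagonal ``potential'' $\eps^{-1}\Lambda$: the Feynman--Kac representation
\[
\hat\phi_i = \bE_i[\exp(-\eps^{-1}\textstyle\int_0^{\tau_A}\Lambda_{\hat X_s\hat X_s}\,ds)]
\]
is automatically in $(0,1]$ because $\tau_A<\infty$ almost surely once state $n+1$ is absorbing and $G$ is irreducible toward it. The principal obstacle in the whole lemma is the positivity step inside (i), which is not an algebraic consequence of the three ingredients above and genuinely depends on the choice of basis; the rest is routine.
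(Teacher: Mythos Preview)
Your treatment of (i)--(iii) is essentially identical to the paper's: row-sum zero from $L\mathds{1}=0$, detailed balance from self-adjointness of $L$, stationarity as a corollary, and the bookkeeping for $F$, $\Lambda$ and $G$ matches line for line. One notable point is that you actually attempt to justify the off-diagonal nonnegativity $K_{ij}\ge 0$, which the paper's proof simply does not address; your observation that this is the only genuinely nontrivial step, and that it depends on the specific basis rather than on the three algebraic ingredients, is correct. Your case-by-case argument is convincing for indicator bases and for two committors, but the sentence ``more generally from the fact that each committor is $L$-harmonic in $T$'' is not a proof for $n+1>2$ cores; harmonicity alone does not force $\nabla\chi_i\cdot\nabla\chi_j\le 0$ pointwise.

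The real methodological difference is in (iv). The paper rewrites the boundary-value system as $\bar G_\lambda\bar\phi=g$ with $\bar G_\lambda$ the $n\times n$ principal block of $-G+\eps^{-1}\Lambda$, observes that under the smallness condition on $f$ this is a nonsingular $M$-matrix and hence inverse-monotone, and concludes $\bar\phi>0$ from $g>0$. You instead invoke the discrete Feynman--Kac representation of Lemma~\ref{lemma_backw_mjp} and read off strict positivity directly from the expectation of a positive random variable. Both routes are valid and roughly equivalent in effort; the $M$-matrix argument gives uniqueness immediately from invertibility, whereas in your approach uniqueness is implicit (it follows once one knows the absorbed chain reaches $A$ a.s., so the Dirichlet problem for the generator has a unique bounded solution), but you should state that step explicitly rather than fold it into ``automatically''. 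Your explicit formula for $C$ is the same condition the paper imposes, just written out.
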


\begin{proof}
$(i)$ follows from $\sum_i \chi_i(x) = \mathds{1}$ and reversibility of $L$: We have $\sum_i \pi(i) K_{ij} = \sum_i \langle \chi_i, L\chi_j\rangle_\mu = \langle L1, \chi_j\rangle_\mu = 0$ and $\pi(i)K_{ij} = \langle \chi_i, L\chi_j\rangle_\mu = \langle L\chi_i,\chi_j\rangle_\mu = \pi(j) K_{ji}$. $(ii)$ follows from $f(x)$ being real and positive for all $x$. As for $(iii)$, $G$ has row sum zero by (i) and the definition of $\Lambda$. $\pi(i)G_{ij} = \pi(j)G_{ji}$ follows from $(i)$, $(ii)$ and the fact that $\Lambda$ is diagonal, and $\pi^T G = 0$ follows directly. For $(iv)$, rewrite  (\ref{eq_stoch2}) as the $n\times n$-system $\bar G_\lambda\bar\phi = g$ where $\bar G_\lambda$ is the first $n$ rows and columns of $G_\lambda:= -G + \eps^{-1} \Lambda$, $\hat\phi = (\bar\phi,1)^T$ and $-g$ is the vector of the first $n$ entries of the $(n+1)$st row of $G_\lambda$. Choose $C$ such that $\eps^{-1} \langle \chi_i, f \chi_j\rangle_\mu \leq \langle \chi_i, L\chi_j\rangle_\mu$ for all $i\neq j$. Then $g > 0$ and $\bar G_\lambda$ is a 
non-singular $M$-matrix and thus inverse monotone \cite{Berman1979}, that is from $\bar G_\lambda \bar\phi = g$ and $g > 0$ follows $\bar \phi > 0$.
\end{proof}

It follows that if the running costs $f$ are such that $(iv)$ in Lemma \ref{lemma_mat} holds, then $G$ is a generator matrix of a MJP that we shall denote by $(\hat{X}_t)_{t\geq 0}$, and by lemma \ref{lemma_backw_mjp}, (\ref{eq_stoch2}) has a unique and positive solution of the form

$$\hat\phi(i) = \bE\left[\exp\left(-\eps^{-1}\int_0^{\tau_A} \hat f(\hat X_s)ds\right) \middle| \hat X_0 = i\right]$$

with $\hat f(i) = \Lambda_{ii}$ and $\tau_A = \inf\{t\geq 0| \hat X_t = i+1\}$. In fact (\ref{eq_stoch2}) can be interpreted as the backward Kolmogorov equation for $\hat\phi$. Moreover, the logarithmic transformation $\hat{W} = -\eps\log\hat{\phi}$ is well-defined and can be interpreted as the value function of the Markov decision problem (\ref{MJP_HJB})--(\ref{MJP_cost}), that is, we seek to minimize
\[
 \hat J(v;i) = \bE\left[\int_0^{\tau_A}\left(\hat{f}(\hat{X}^{v}_s) + k^{v}(\hat{X}^{v}_s)\right)ds \middle| \hat X^{v}_0 = i\right]
\]
over Markov control strategies $v: \hat\cS\rightarrow (0,\infty)$ with the costs
\begin{equation*} \label{eq_runningcost}
\hat{f}(i) = \Lambda_{ii}\,,\quad k^{v}(i) = \eps\sum_{j\neq i} G_{ij}\left\{ \frac{v(j)}{v(i)}\left[\log \frac{v(j)}{v(i)} - 1\right] + 1\right\}.
\end{equation*}

This completes the construction of the discrete control problem. We now analyse it in detail for the two choices of projection subspace $D$ introduced before.

\paragraph{Full partitions}

We partition $\cS$ into disjoint sets $A_i$ that we take to be rectangular with centres $x_i$, we let $S_{ij} = A_i\cup A_j$ and $h_{ij}$ be the line joining $x_i$ and $x_j$, see Figure \ref{fig_fva_mesh}. Let $m(A_i)$, $m(S_{ij})$ and $m(h_{ij})$ be the Lebesgue volumes of the cells $A_i$, surfaces $S_{ij}$ and lines $h_{ij}$ respectively, and let $\bar x_{ij} = S_{ij}\cap h_{ij}$.

\begin{figure}[ht]
 \centering
 \def\svgwidth{0.4\columnwidth}
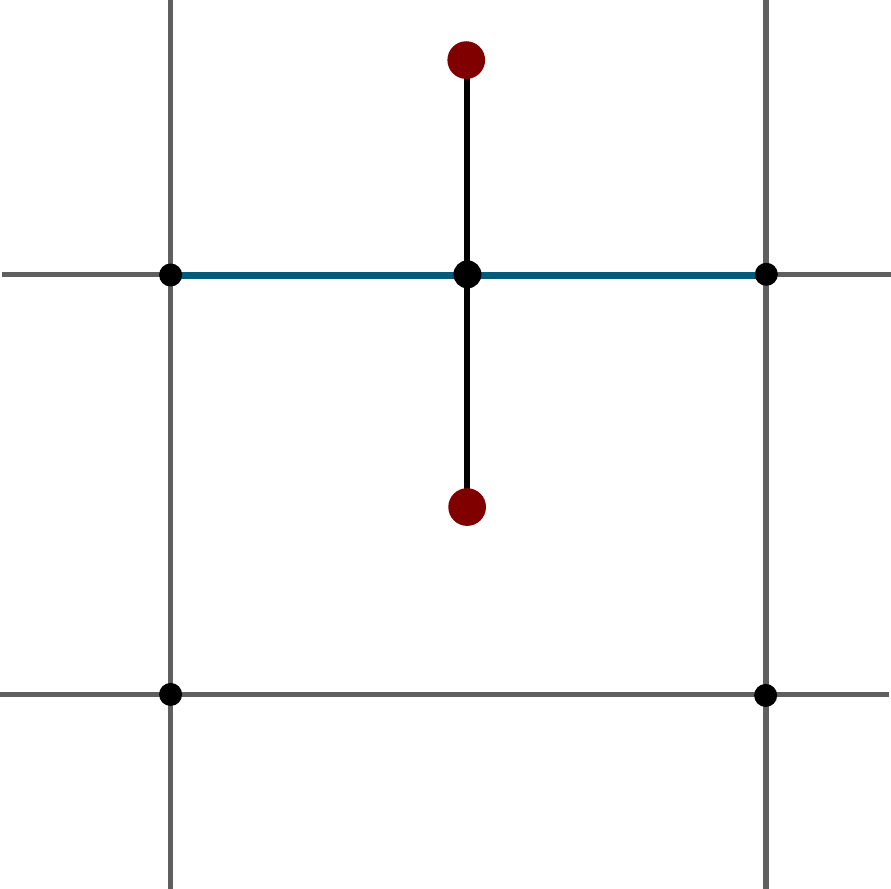
\caption{The mesh for the full partition.}
\label{fig_fva_mesh}
\end{figure}

We show in Appendix \ref{app_FVA} that the matrix $K_{ij}$ has then components

\begin{equation}
 K_{ij} \approx \frac{1}{\Delta_{ij}} e^{-\eps^{-1} (V(\bar x_{ij})-V(x_i))}, \quad \Delta_{ij}^{-1} = \eps \frac{m(S_{ij})}{m(h_{ij})m(A_i)}
\label{FVA_gen}
\end{equation}

if $i$ and $j$ are neighbours ($K_{ij} = 0$ otherwise). $K$ is the generator of a MJP on the cells $A_i$ and coincides with the so-called \emph{finite volume approximation} of $L$ discussed in \cite{Latorre2011}. The approximations we use in Appendix \ref{app_FVA} to calculate the integrals coincide with the ones used in \cite{Latorre2011}. The invariant distribution of $K$ is

\begin{equation}
 \pi_i := \|\chi_i\|_1 = \int_{A_i} d\mu \approx m(A_i)e^{-\beta V(x_i)}.\label{eq_FVA_dist}
\end{equation}

Since $\chi_i \cap \chi_j = \emptyset$ for $i\neq j$, $F$ is diagonal, and we obtain the running costs

\begin{equation}
 \hat f(i) = \frac{1}{\pi_i}\int_{A_i}f(x)\mu(x)dx = \bE_\mu[f(X_t)| X_t\in A_i]
\label{eq_lambda}
\end{equation}

by simply averaging $f(x)$ over the cell $A_i$. (\ref{eq_lambda}) is also a sampling formula for $\hat f(i)$. It follows directly that $G=K$, and in particular assumption (iv) of Lemma \ref{lemma_mat} holds for any $f$.

$K$ and $\pi$ can be computed from the potential $V$ and the geometry of the mesh. In fact we can also derive a relation to standard Markov state modelling. Let $P^\tau$ be the MSM transition matrix with lag time $\tau$ associated to the partition $\{A_1,\ldots,A_{n+1}\}$, that is:

\begin{equation*}
 P^\tau_{ij} = \frac{1}{\|\chi_i\|_1}\langle \chi_i, T_\tau \chi_j\rangle_\mu, \quad T_\tau = \exp(\tau L)\,.
\end{equation*}

Then, by bounded convergence,

\begin{equation}
\lim_{\tau\rightarrow 0} \frac{1}{\tau}\left(P^\tau_{ij} - \delta_{ij}\right) = \lim_{\tau\rightarrow 0}\frac{1}{\pi_i}\langle \chi_i, \frac{1}{\tau}(T_\tau - \mathds{1})\chi_j\rangle = \frac{1}{\pi_i}\langle \chi_i, L\chi_j\rangle = K_{ij},
\label{eq_fullMSM}
\end{equation}

thus $K$ is the generator of the semigroup of transition matrices $P^\tau$. For finite lag time $\tau$ the transition matrices $P^\tau$ can be sampled from long realizations of the original dynamics. This introduces a sampling error which depends on details of the partition, the available sampling data and the existence of rare transitions in the system. We do not address the sampling error in this paper, see e.g. \cite{Prinz,Roeblitz08}. In view of (\ref{eq_fullMSM}) we could in principle sample $K$ by sampling $P^\tau$ for very small $\tau$, this is difficult however due to recrossing problems, see e.g. \cite{Chodera2011}.

\paragraph{Recovering Markov Chain approximations}

In Appendix \ref{app_MCA} we show that, if $\cS$ is one-dimensional\footnote{These assumptions are mostly for notational convenience. The proof should be straightforward to generalise.} and the cells are intervals of length $h$, we can write the nonzero off-diagonal components of the generator of the controlled MJP as

\begin{eqnarray}
 G^v_{i,i \pm 1} & = & \frac{1}{h^2}\left(\eps \mp \frac{h}{2}\left(\nabla V(x_i) - \alpha_v(i)\right) + \mathcal{O}(h^2)\right), \notag \\ 
 \alpha_v(i) & = & \frac{\eps}{h}\left(\log v(i+1) - \log v(i-1)\right) \label{eq_mca1}
\end{eqnarray}

where as usual $G^v_{ii} = -\sum_{j\neq i} G^v_{ij}$. We also show that the running costs of strategy $v$ can be written as

\begin{equation}
 k^v(i) = \frac{1}{4}\alpha_v^2(i) + \mathcal{O}(h). \label{eq_mca_cost}
\end{equation}

This may be compared to a well-known discretization of continuous optimal control problems known as the Markov chain approximation (MCA); see \cite{Kushner1992}. The MCA discretization may be obtained by replacing derivatives with finite differences in the continuous control problem (\ref{hjb}). The result is

\begin{equation}
 \min_{\tilde\alpha(i)\in \R} \left[(\tilde G^{\tilde\alpha} \tilde W)(i) + \frac{1}{4}\tilde\alpha^2(i)  + \tilde f(i)\right] = 0 \;\mbox{for}\; i\in \{1,\ldots,n\}, \quad \hat W(n+1) = 0 \label{eq_mca2}
\end{equation}

which is a Bellman equation for the MCA optimal cost $\tilde W$ with strategies $\tilde \alpha \in \R^n$ and average running costs $\hat f(i)$. The nonzero components of the MCA generator\footnote{In the literature, one usually considers the matrix $I+\tilde G^{\tilde\alpha}$ and interprets it as a transition matrix for a Markov chain. To be able to compare with our approach, we instead interpret $\tilde G^{\tilde\alpha}$ as a generator, which is equivalent.} $\tilde G^{\tilde\alpha}$ corresponding to the strategy vector $\tilde\alpha$ are

\begin{equation}
  \tilde G^{\tilde\alpha}_{i,i\pm 1} = \frac{1}{h^2}\left(\eps - \frac{h}{2}\left(\nabla V(i) - \tilde\alpha_i\right)\right), \quad \tilde G_{ii} = -\sum_{j\neq i}\tilde G_{ij}.
\label{mca_matrix}
\end{equation}

To compare both control problems, we need to be able to compare strategies. For our MJP control problem, strategies $v$ were positive functions on $\hat{\cS}$, but with $w = \log v$ we can think of $U$ as $\R^{n+1}$. For the MCA approximation, $\tilde U = \R^n$. (\ref{eq_mca1}) gives a mapping $z: U\rightarrow \tilde U$ with $z(v) = \alpha_v$. It can be shown that $z$ is onto and can therefore be used to map strategies. Now, comparing (\ref{eq_mca2}) and (\ref{mca_matrix}) with (\ref{MJP_HJB}) and (\ref{eq_mca1}) gives $G^v = \tilde G^{\tilde\alpha}\left(1 + \mathcal{O}(h^2)\right)$ if we set $\tilde{\alpha} = z(v) = \alpha_v$, and the Bellman equations are equal up to first order in $h$ if strategies are mapped accordingly. Moreover, optimal strategies have the same functional dependence on optimal costs:

\begin{eqnarray*}
 \tilde\alpha^* & = & -\frac{1}{h}\left(\tilde W(i+1) - \tilde W(i-1)\right) \\ 
 \alpha_{v^*} & = & \frac{\eps}{h}\left(\log v^*(i+1) - \log v^*(i-1)\right) = -\frac{1}{h}\left(\hat W(i+1) - \hat W(i-1)\right).
\end{eqnarray*}

In the limit $h\rightarrow 0$, our discretization therefore coincides with the MCA. Convergence theory for MCAs \cite{Kushner1992} states that the discrete control problem (\ref{eq_mca2}) converges for $h\rightarrow 0$ to the continuous problem (\ref{hjb}) and that $\tilde W \rightarrow W$ and $\tilde \alpha \rightarrow \alpha$ converge weakly in $V$. Therefore we can deduce that $\hat W\rightarrow W$ and $\alpha_v^*\rightarrow \alpha$ converge weakly in $V$ as $h\rightarrow 0$.

\paragraph{Incomplete partitions}

We use a core set partition of $\cS$ as introduced in Section \ref{sec_Galerkin}. The projection onto the committor basis $\chi_i$ also allows for a stochastic interpretation in terms of the forward and backward milestoning process $\tilde X_t^\pm$, which we define in the following way: $\tilde X_t^+ = i$ if the process $X_t$ visits the core set $C_i$ next, and $\tilde X_t^- = i$ if $X_t$ came from $C_i$ last. With this definition, the discrete costs can be written as
\begin{equation}
   \hat f(i) = \frac{1}{\pi_i}\langle \chi_i,f\sum_j \chi_j\rangle = \int \nu_i(x)f(x)dx = \bE_\mu\left[f(X_t)\middle|\tilde X_t^{-} = i\right]
\end{equation}

where $\nu_i(x) = \pi_{i}^{-1}\chi_i(x)\mu(x) = \bP(X_t = x| \tilde X_t^{-} = i)$ is the probability density of finding the system in state $x$ given that it came last from $i$. Hence $\hat f(i)$ is the average costs conditioned on the information $\tilde X^{-}_t = i$, i.e. $X_t$ came last from $A_i$, which is the natural extension to the full partition case where $\hat f(i)$ was the average costs conditioned on the information that $X_t\in A_i$.

The matrix $K_{ij} = \pi_i^{-1}\langle \chi_i,L \chi_j\rangle$ is reversible with stationary distribution 

$$\pi_i = \langle \chi_i, \mathds{1} \rangle = \bP_\mu(\tilde X_t^- = i)$$ 

and is related to so called {\em core MSMs}. To see this, define the core MSM transition matrix $P^\tau$ with components $P^\tau_{ij} = \bP(\tilde X_{t+\tau}^+ = j|\tilde X_t^- = i)$, and the mass matrix $M$ with components $M_{ij} = \bP(\tilde X_{t}^+ = j|\tilde X_t^- = i)$. Then, it is not hard to show that for reversible processes we have $P^\tau_{ij} = \pi_i^{-1}\langle \chi_i, T^\tau \chi_j\rangle_\mu$ and $M_{ij} = \pi_i^{-1}\langle \chi_i,\chi_j\rangle_\mu$ so that

$$K = \frac{1}{\pi_i}\langle \chi_i,L\chi_j\rangle_\mu =  \lim_{\tau\rightarrow 0 } \frac{1}{\tau}\left(P^\tau - M\right).$$

Formally, $K$ is the generator of the $P^\tau$, but these do not form a semigroup since $M\neq \mathds{1}$, and therefore we cannot interpret $K$ directly as e.g. the generator of $\tilde X_t^-$. Nevertheless, the entries of $K$ are the transition rates between the core sets as defined in transition path theory \cite{Vanden-Eijnden2006}. We can obtain $P^\tau$ and $M$ from sampling as in the full partition case. The difference is that if the core sets are chosen as the metastable states of the system, $P^\tau$ can be sampled for all lag times $\tau$, and $K$ can be sampled directly. See \cite{Sarich2010}, \cite{Djurdjevac2010} and \cite{Schuette2011} for more details on the construction and sampling of core MSMs. In Appendix \ref{app_samplingF} we show that $F$ can also be sampled using

\begin{equation}
F_{ij} = \bE_\mu\left[f(X_t) \chi_{\{\tilde X_t^{+}= j\}}\middle| \tilde X_t^- = i\right]\label{eq_samplingF}
\end{equation}

Therefore, as in the construction of core MSMs, we do not need to compute committor functions explicitly. Note however that $G \neq K$, there is a reweighting due to the overlap of the $\chi_i$'s which causes $F$ to be non-diagonal. This reweighting is the surprising bit of this discretization. From Lemma \ref{lemma_mat} we see however that $G$ and $K$ are both reversible with stationary distribution $\pi$. Finally, note that if the cost function $f(x)$ doesn't satisfy $\|f\|_\infty \leq C$ from (iv) in Lemma \ref{lemma_mat}, $G$ will not even be a generator matrix. In this case (\ref{eq_FK_disc}) still has a solution $\hat \phi$ which is the best-approximation to $\phi$, but this solution may not be unique, it may not satisfy $\hat \phi > 0$, and we have no interpretation as a discrete control problem.

\section{Numerical Results}

We will present two examples to illustrate the approximation of LQ-type stochastic control problems based on a sparse Galerkin approximation using MSMs.   

\subsection{1D triple well potential}

To begin with we study diffusion in the triple well potential which is presented in Figure \ref{fig_num1pota}. This potential has three minima at approximately $x_{0/1} = \pm 3.4$ and $x_2 = 0$. We choose the three core sets $C_i = [x_i-\delta,x_i+\delta]$ around the minima with $\delta = 0.2$. We choose $C_0 = A$ as the target set and the running cost $f=\sigma = const$, such that the control goal is to steer the particle into $C_0$ in minimum time.

In Figure \ref{fig_num1pota} the potential $V$ and effective potential $U$ are shown for $\eps = 0.5$ and $\sigma = 0.08$ (solid lines), cf. equation (\ref{SDE_controlled}). One can observe that the optimal control effectively lifts the second and third well up which means that the optimal control will drive the system into $C_0$ very quickly. The reference computations here have been carried out using a full partition FEM discretization of (\ref{linbvp}) with a lattice spacing of $h=0.01$. Now we study the MJP approximation constructed via the committor functions shown in Figure \ref{fig_num1potb}. These span a three-dimensional subspace, but due to the boundary conditions the subspace $D_0$ of the method is actually two-dimensional. The dashed line in Figure \ref{fig_num1pota} gives the approximation to $U$ calculated by solving (\ref{eq_stoch2}). We can observe extremely good approximation quality, even in the transition region. In Figure \ref{fig_num1potb_1} the optimal control $u^*(x)$ (solid line) and 
its approximation $\hat u^* = -2\nabla\hat W$ (dashed line) are shown. The core sets are shown in blue. We can observe jumps in $\hat u^*$ at the left boundaries of the core sets. This is to be expected and comes from the fact that the committor functions are not smooth at the boundaries of the core sets, but only continuous.

Next we construct a core MSM to sample the matrices $K$ and $F$. 100 trajectories of length $T=20000$ were used to build the MSM. In Figure \ref{fig_num1potd}, $W$ and its estimate using the core MSM is shown for $\epsilon = 0.5$ and different values of $\sigma$. Each of the 100 trajectories has seen about four transitions. For comparison, a direct sampling estimate of $W$ using the same data is shown (green). The direct sampling estimate suffers from a large bias and variance and is practically useless. In contrast, the MSM estimator for $W$ performs well for all considered values of $\sigma$. The constant $C$ which ensures $\hat\phi>0$ when $\sigma\leq C$ is approximately $0.2$ in this case. This seems restrictive but still allows to capture all interesting information about $\phi$ and $W$.

\begin{figure}[ht]
 \begin{subfigure}[b]{0.48\textwidth}
 \includegraphics[width=0.98\textwidth]{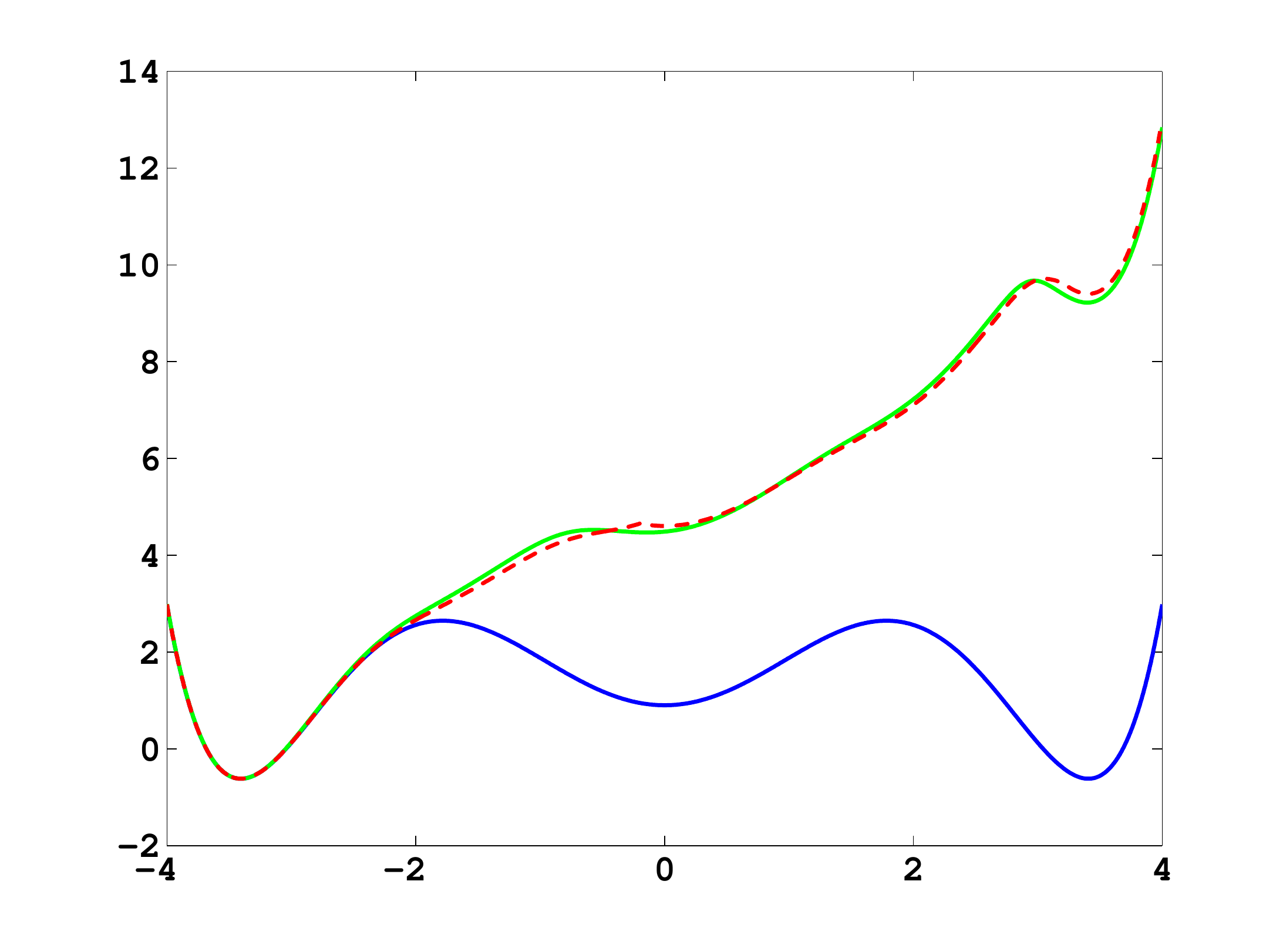}
 \caption{}
 \label{fig_num1pota}
 \end{subfigure}
 \begin{subfigure}[b]{0.48\textwidth}
 \includegraphics[width=0.98\textwidth]{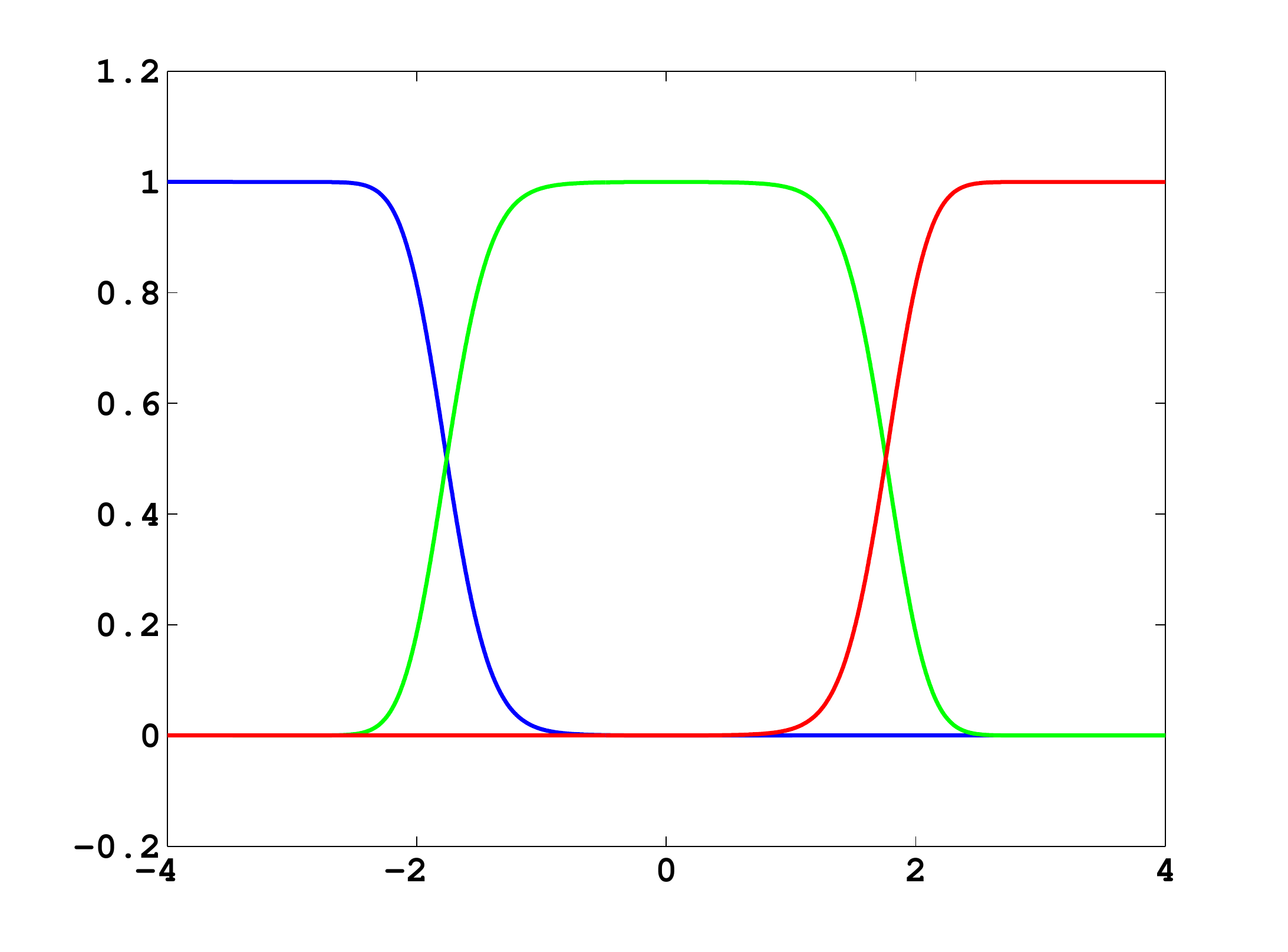}
 \caption{}
 \label{fig_num1potb}
 \end{subfigure}
 \\
 \begin{subfigure}[b]{0.48\textwidth}
 \includegraphics[width=0.98\textwidth]{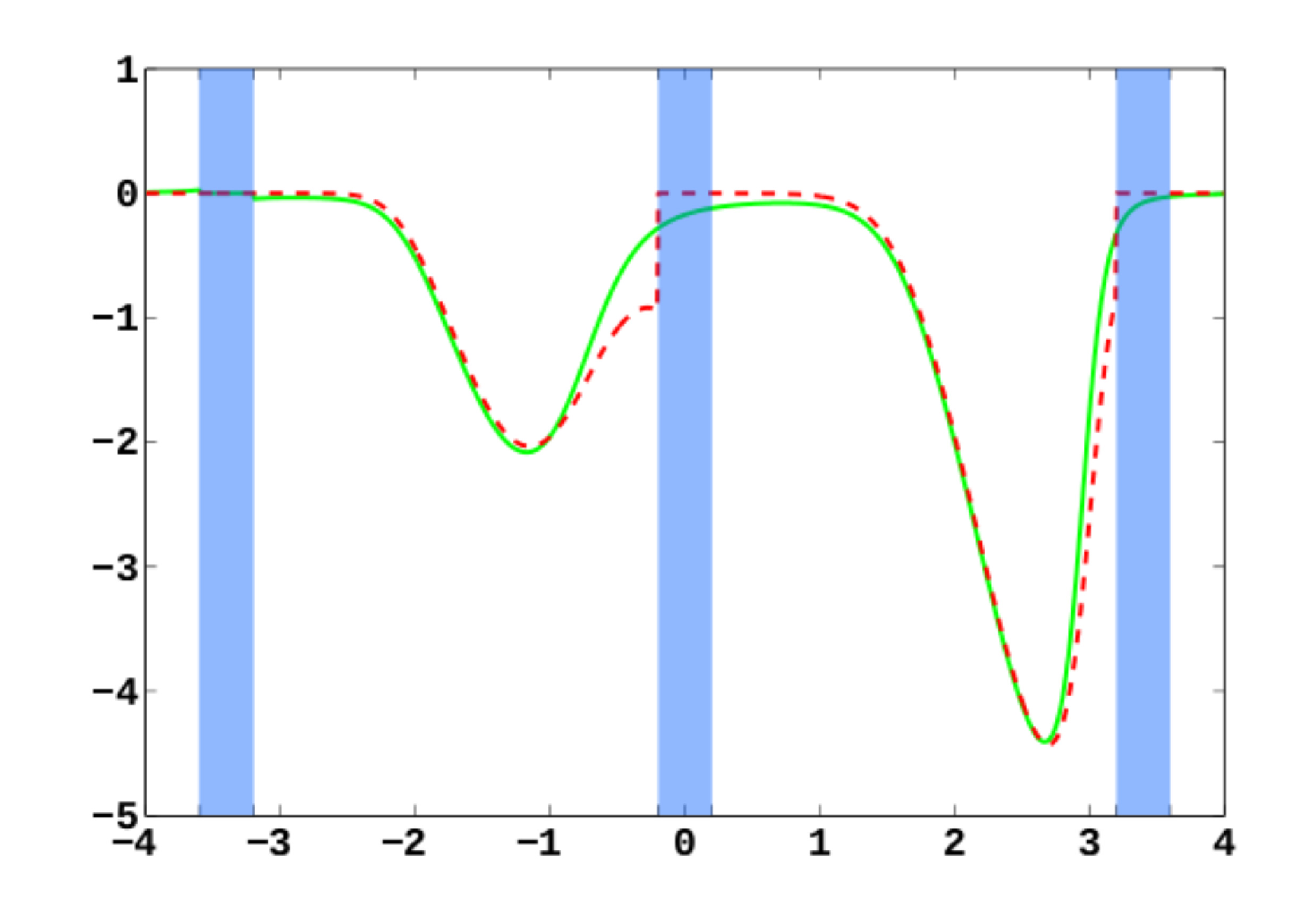}
 \caption{}
 \label{fig_num1potb_1}
 \end{subfigure}
 \begin{subfigure}[b]{0.48\textwidth}
 \includegraphics[width=0.98\textwidth]{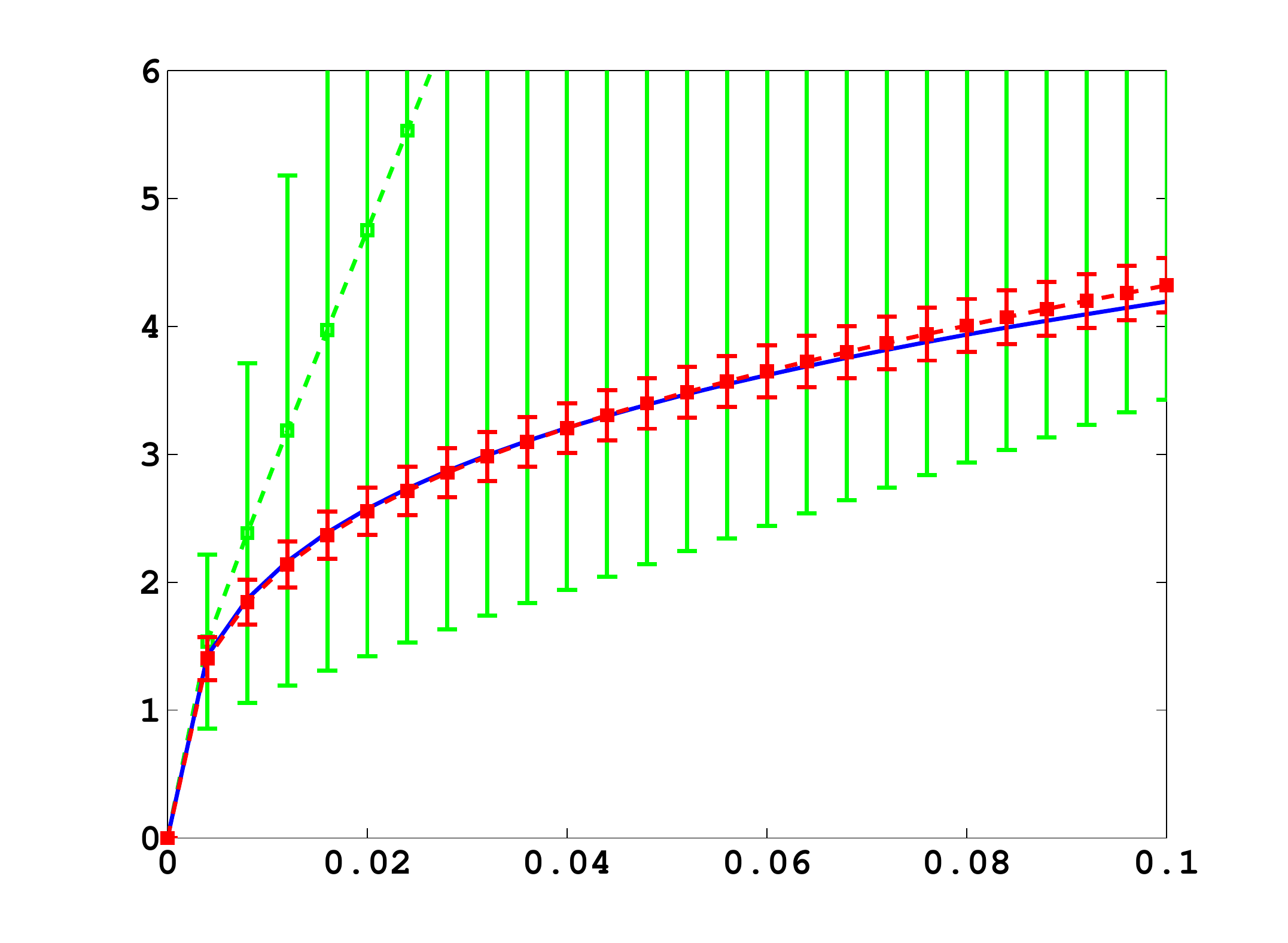}
 \caption{}
 \label{fig_num1potd}
 \end{subfigure}
 \caption{Three well potential example for $\epsilon = 0.5$ and $\sigma = 0.08$. (\subref{fig_num1pota}) Potential $V(x)$ (blue), effective potential $U = V + 2W$ (green) and approximation of $U$ with committors (dashed red). (\subref{fig_num1potb}) The three committors $\chi_1(x)$, $\chi_2(x)$ and $\chi_3(x)$. (\subref{fig_num1potb_1}) The optimal control $\alpha^*(x)$ (solid line) and its approximation (dashed line). Core sets are shown in blue. (\subref{fig_num1potd}) Optimal cost $W$ for $\eps = 0.5$ as a function of $\sigma$. Blue: Exact solution. Red: Core MSM estimate. Green: Direct sampling estimate.}
 \label{fig_num1pot}
\end{figure}

\FloatBarrier

\subsection{Alanine dipeptide}

As a second, non-trivial example we study the $\alpha$-$\beta$ conformational transition in Alanine dipeptide (ADP), a well-studied test system for molecular dynamics applications. We use a $1\mu s$ long molecular dynamics trajectory simulated in a box of 256 (explicit) water molecules using the CHARMM27 force field. The conformational dynamics is monitored as usual via the backbone dihedral angles $\phi$ and $\psi$. The data was first presented in \cite{Schuette2011}. In Figure \ref{fig_alanine}, a cartoon of the molecule is shown.

\begin{figure}[ht]
 \centering
 \includegraphics[width=0.3\textwidth]{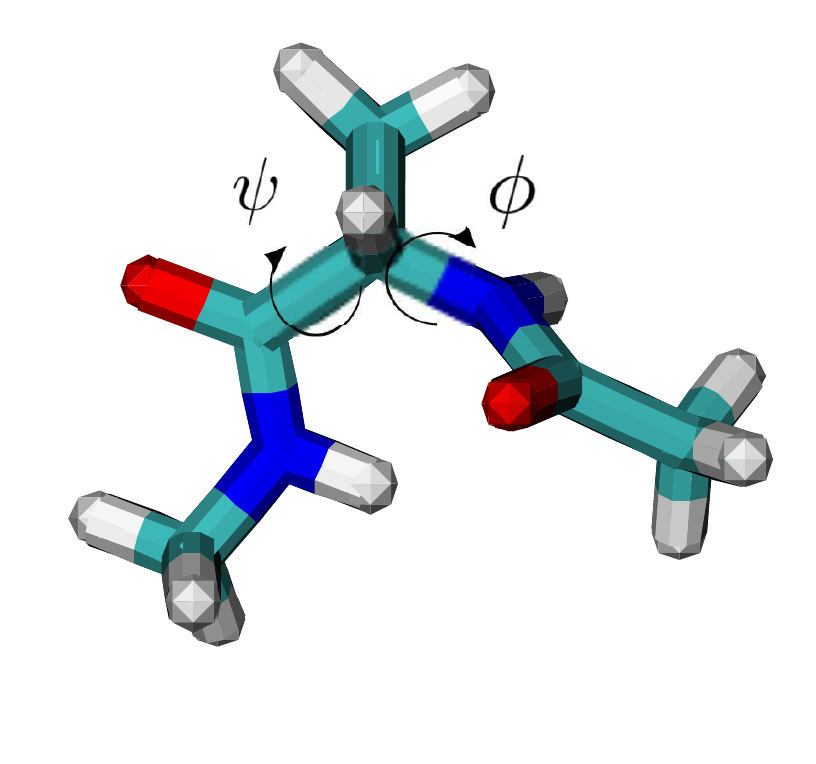}
 \caption{Alanine dipeptide.}
 \label{fig_alanine}
\end{figure}

\paragraph{Validation of the MSM approximation}

We construct a full partition MSM with 250 clusters using $k$-means clustering. The cluster algorithm uses the Cartesian coordinates of the ADP configurations as input data, ignoring the ADP velocities and the solvent molecules. Our first test is study the effect of the approximation of the generator matrix $K$ by the sampled transition matrix $P^{\tau}$ according to $\tau^{-1}(P^\tau - 1)$; see (\ref{eq_fullMSM}). To obtain a robust estimate of $K$, we first focus on the mean first passage time (MFPT) $t(x) = \bE_x[\tau_\alpha]$ where $\tau_\alpha$ is the first hitting time of the $\alpha$ conformation, which we define as a ball $C_\alpha$ with radius $r=45$ around the known minimum $(\phi_\alpha,\psi_\alpha) = (-80,-60)$ of the free energy landscape in $(\phi,\psi)$. The MFPT satisfies the matrix equation
$$K\hat t = -1 \;\mbox{outside }\, C_\alpha,\quad \hat t=0\;\mbox{in}\;C_\alpha$$

which we study with $K$ replaced by $\tau^{-1}(P^ \tau - 1)$. In Figure \ref{fig_num3pota}, the results are shown for $\tau = 5ps$, we can identify the $\beta$-structure as the red cloud of clusters where $t(x)$ is approximately constant. In \ref{fig_num3potb}, $\hat t_{\beta\alpha} = \bE(\hat t(i)|i\in \beta)$ is shown as a function of $\tau$. We observe a linear behaviour for large $\tau$ which is due to the linear error introduced in the replacement of $K$ with $\tau^{-1}\left(P^\tau - 1\right)$ and a nonlinear drop for small $\tau$ which is due to non-Markovianity. Our best guess is therefore a linear interpolation to $\tau = 0$, which is indicated by the solid line. The result is $\hat t^{0}_{\beta\alpha} = 35.5ps$. As a comparison the reference value $\hat{t}^{\rm ref}_{\beta\alpha} = 36.1 ps$ from \cite{Schuette2011} is shown as a dashed line, that was computed therein as an inverse rate, using the slowest \emph{implied time scale (ITS)} and information about the equilibrium weights of the $\alpha$ and $\beta$ 
structure. We see very good agreement, which indicates that the strategy of linearly interpolating lag time dependent results to $\tau=0$ is robust.

\paragraph{Controlled transition to the $\alpha$-helical structure}
Next we consider an optimal control problem for steering the molecule into the $\alpha$-structure. We choose as the target region $A=C_\alpha$ and define running costs in the $(\phi,\psi)$ variables as $f(\phi,\psi) = f_0 + f_1 \|\psi-\psi_\alpha\|^2$ where $f_0$ and $f_1$ are constants and $\|\cdot\|$ is a simple metric on the torus. We choose $f_0 = 0.01$ and $f_1 = 0.001$, which represents a mild penalty for being away from the target region. We discretize this control problem using the same partition as for the MSM construction above. The matrix $K$ is again replaced by $\tau^{-1}(P^\tau - 1)$, the matrix $F$ is diagonal and can be sampled straightforwardly. The resulting generator matrix $G^{v^*}$ of the optimally controlled process can be used to compute the MFPT $\hat t^{(v^*)}$ of the controlled process according to the matrix equation
$$G^{v^*}\hat t^{(v^*)} = -1 \;\mbox{outside }\, C_\alpha,\quad \hat t^{(v^*)}=0\;\mbox{in}\;C_\alpha.$$
The results will again depend on the lag time $\tau$. Figure \ref{fig_num3potc} shows the results for $\tau = 5ps$, while \ref{fig_num3potd} shows the MFPT for different lag times and a linear interpolation to $\tau = 0$. We observe that the control leads to a speedup of the MFPT by 1--2 orders of magnitude. A larger speedup could easily be achieved by increasing the relative weight of $f$, compared to the quadratic penalization of the force. 

Figures \ref{fig_num3pote} and \ref{fig_num3potf} show the optimal cost $\hat W$ and optimal strategy $v^*$ for this problem. The optimal control $v^*$ is best understood in terms of the jump rates 
\[
G^{v^*}_{ij} = \frac{G_{ij}v^*(j)}{v^*(i)}\,.
\] 
If $v^*(i)$ is low, the controller accelerates jumps out of state $i$ while slowing down jumps into state $i$, and vice versa if $v^*(i)$ is high. The red cloud in Figure~\ref{fig_num3potf} actually has value 1, in accordance with the boundary conditions for $v^*$.

\begin{figure}[ht]
 \begin{subfigure}[b]{0.48\textwidth}
 \includegraphics[width=0.98\textwidth]{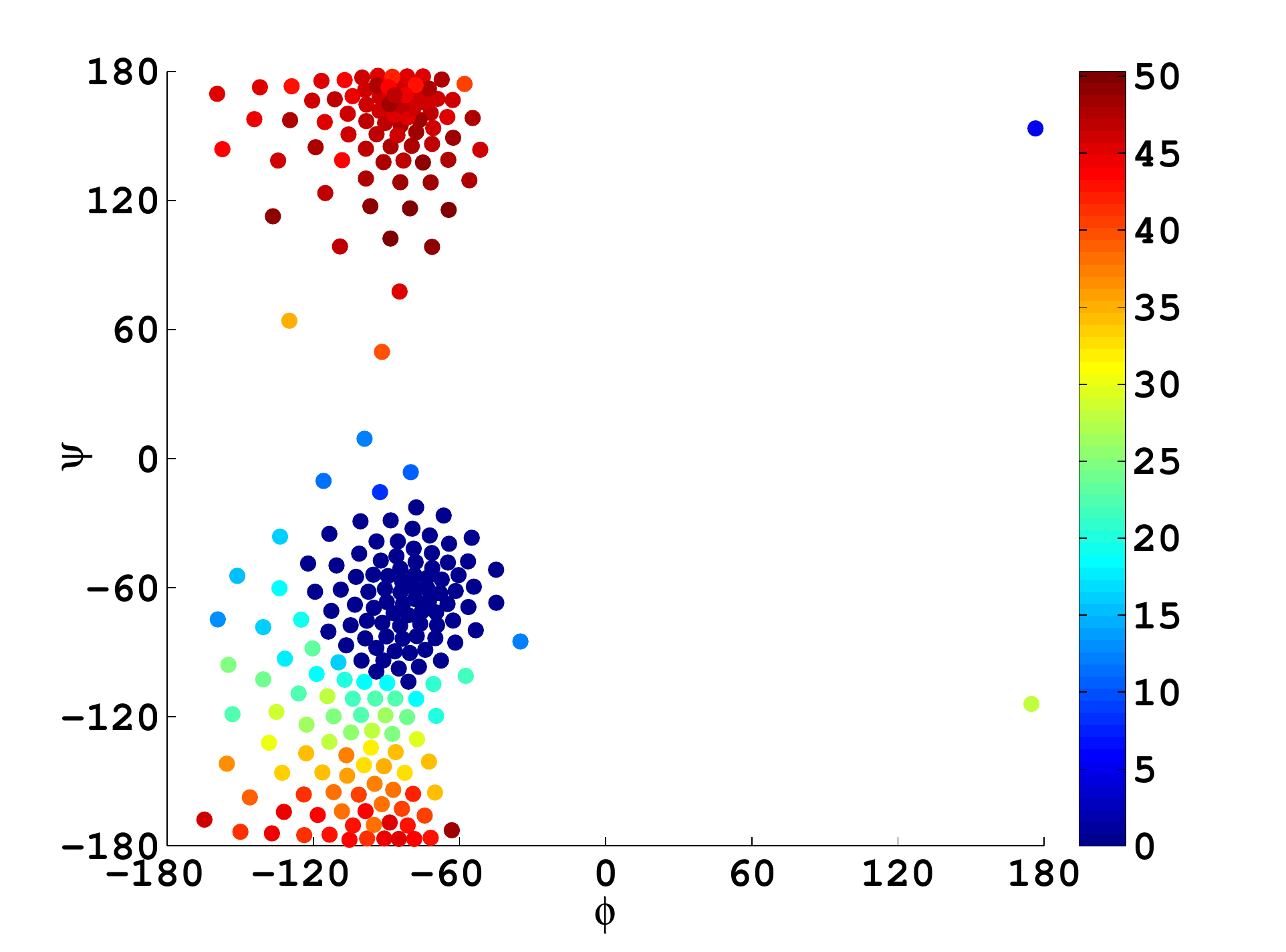}
 \caption{}
 \label{fig_num3pota}
 \end{subfigure}
 \begin{subfigure}[b]{0.48\textwidth}
 \includegraphics[width=0.98\textwidth]{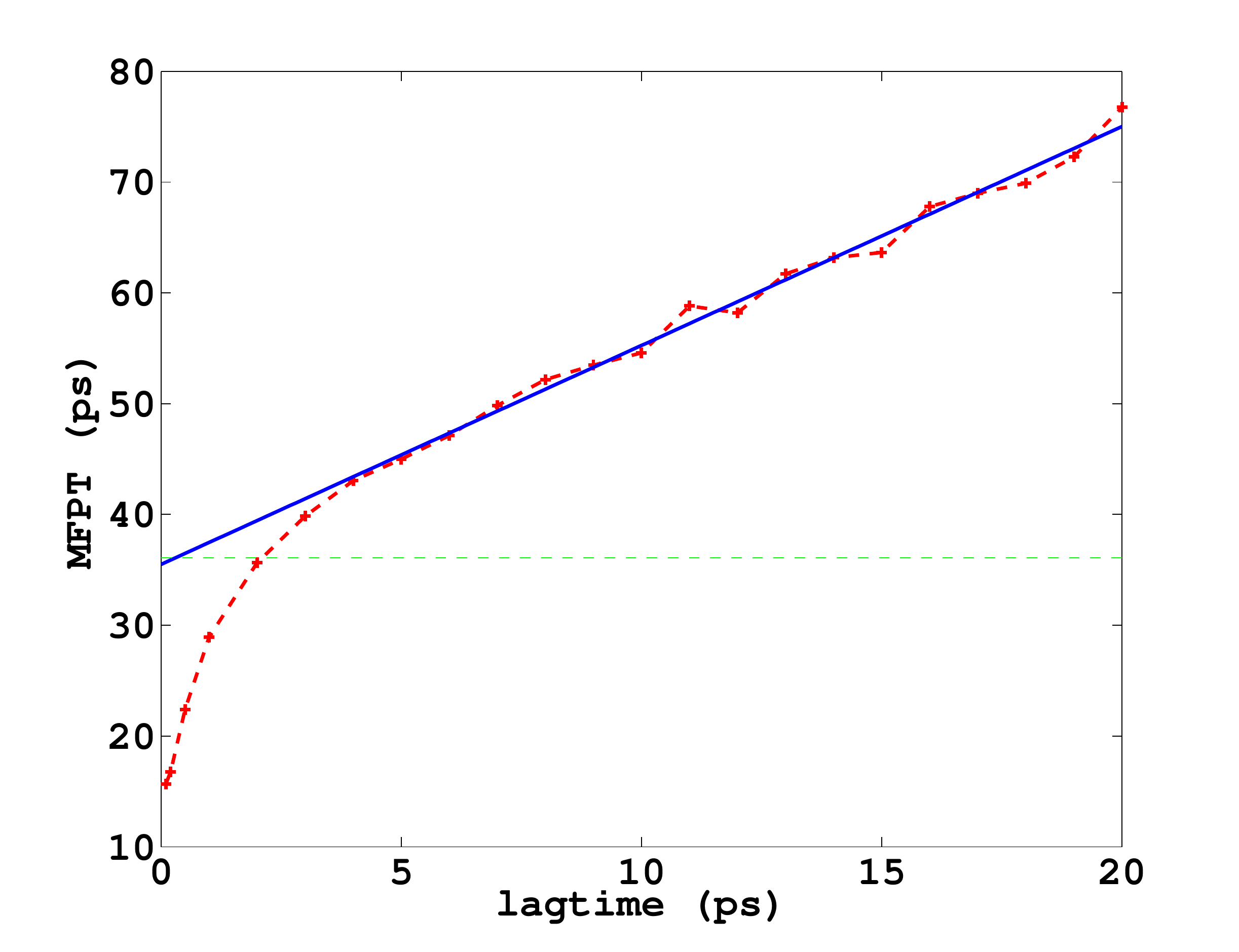}
 \caption{}
 \label{fig_num3potb}
 \end{subfigure}
 \\
 \begin{subfigure}[b]{0.48\textwidth}
 \includegraphics[width=0.98\textwidth]{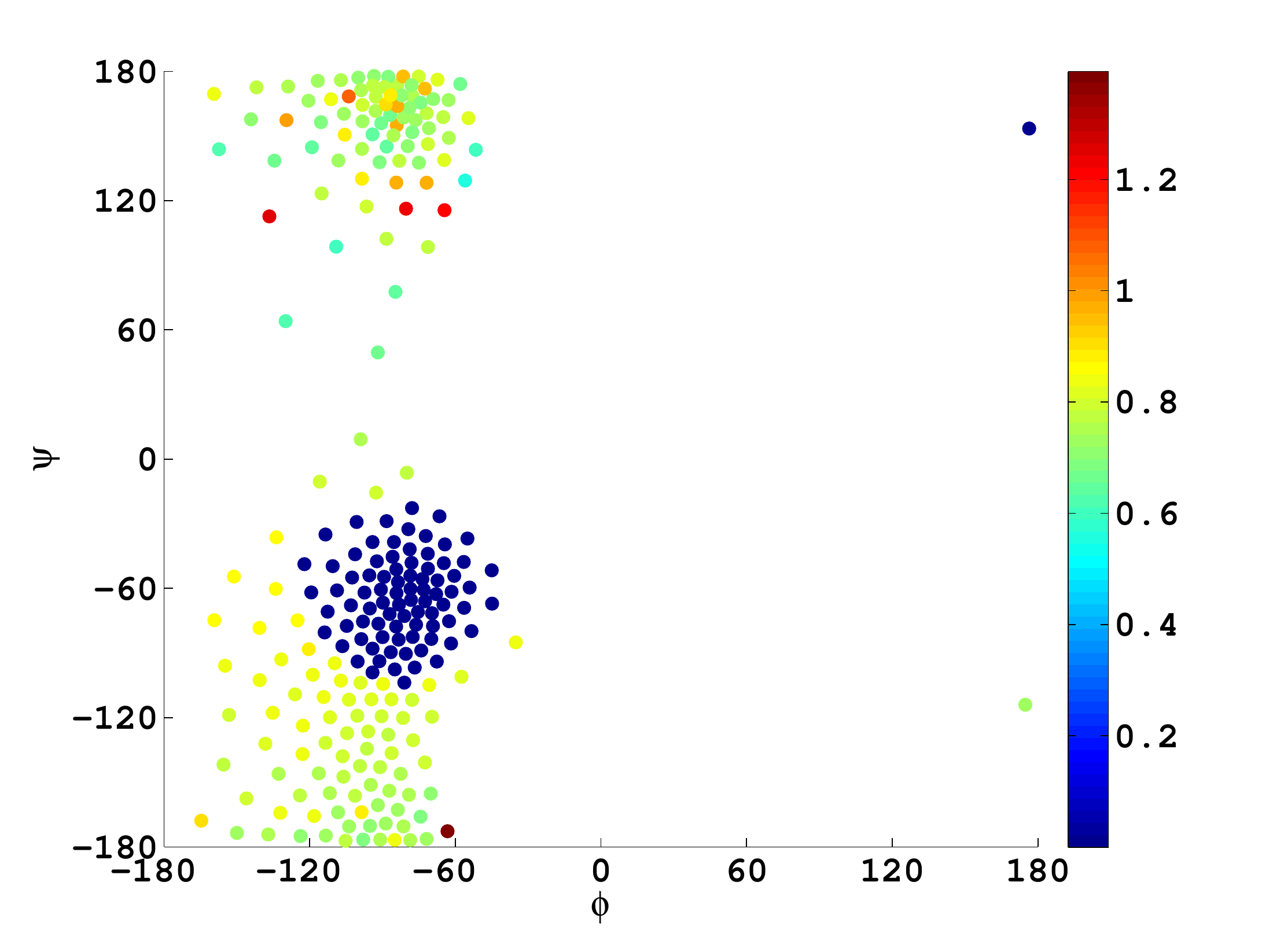}
 \caption{}
 \label{fig_num3potc}
 \end{subfigure}
 \begin{subfigure}[b]{0.48\textwidth}
 \includegraphics[width=0.98\textwidth]{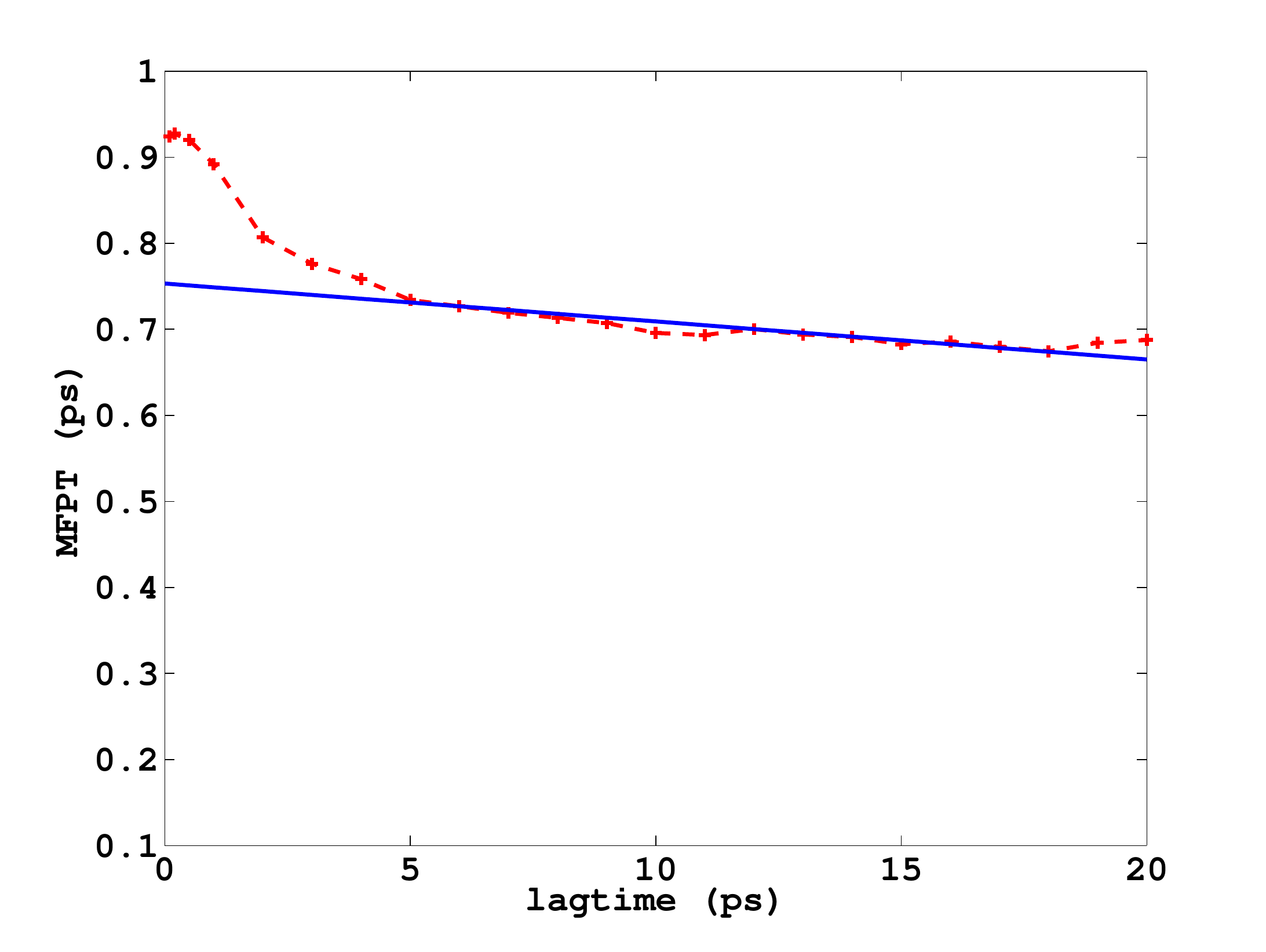}
 \caption{}
 \label{fig_num3potd}
 \end{subfigure}
 \\
 \begin{subfigure}[b]{0.48\textwidth}
 \includegraphics[width=0.98\textwidth]{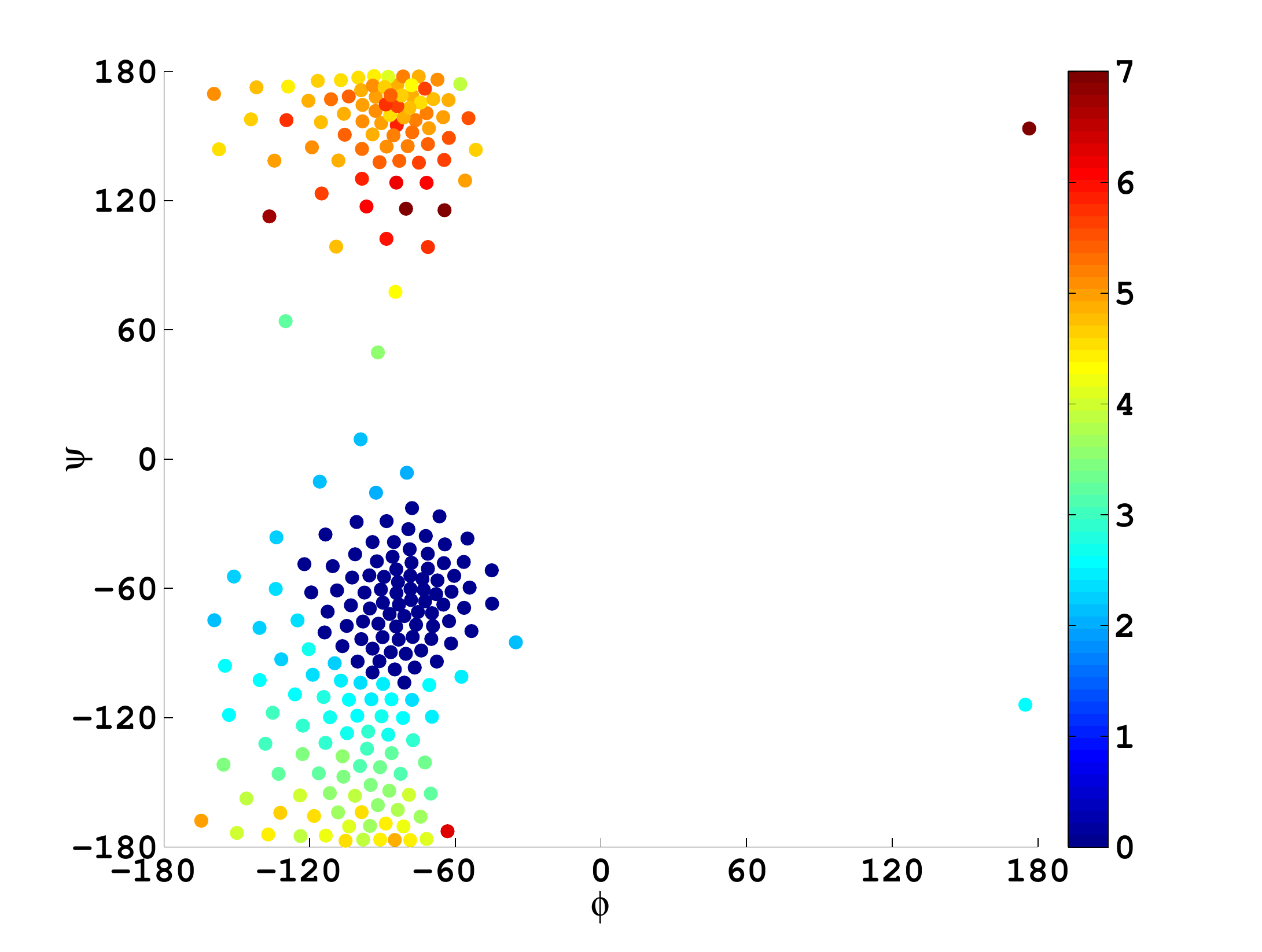}
 \caption{}
 \label{fig_num3pote}
 \end{subfigure}
 \begin{subfigure}[b]{0.48\textwidth}
 \includegraphics[width=0.98\textwidth]{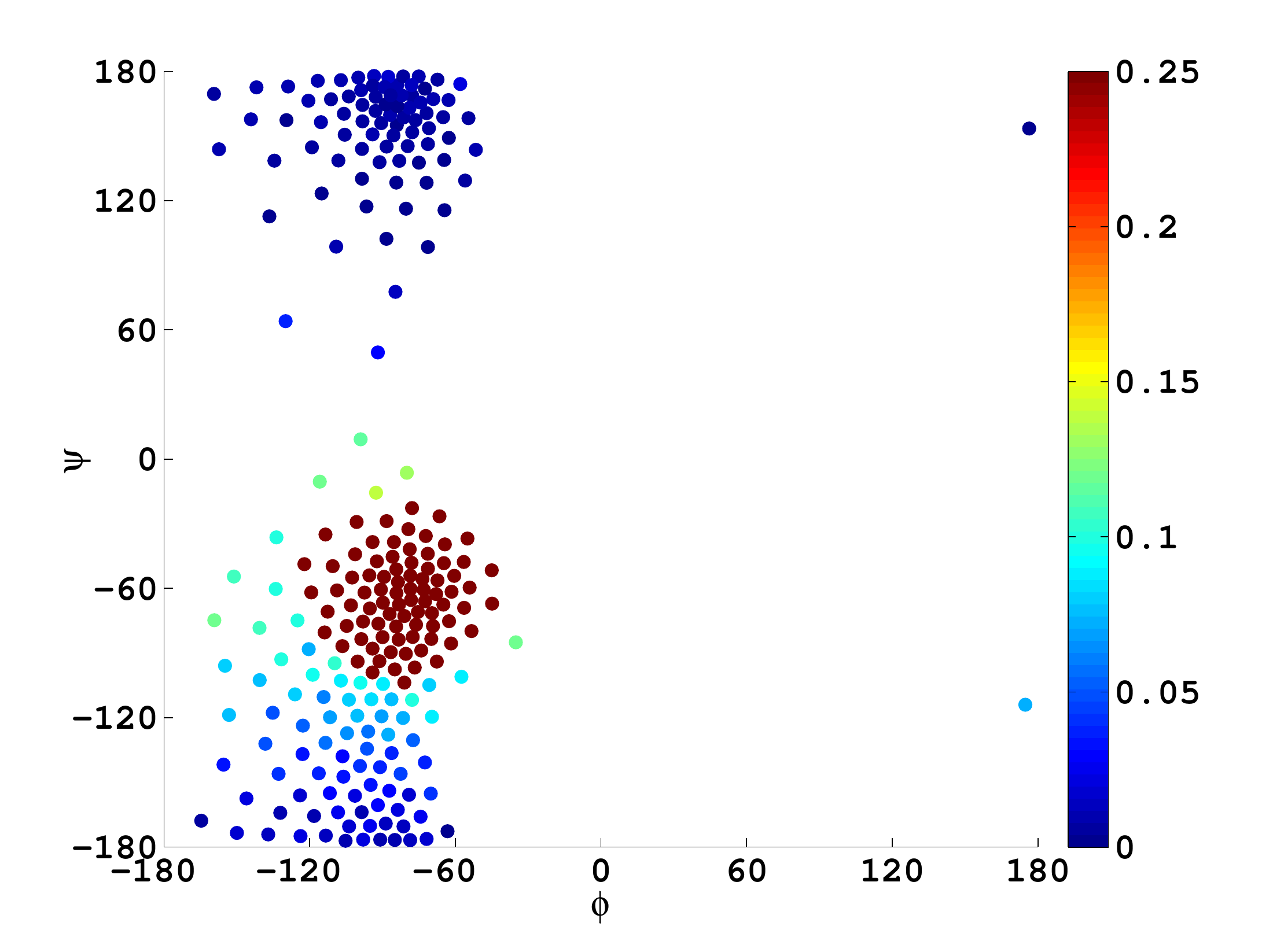}
 \caption{}
 \label{fig_num3potf}
 \end{subfigure}
 \label{fig_num3pot}
 \caption{Top: (\subref{fig_num3pota}) MFPT in ps from $\beta$ to $\alpha$ in $\phi$-$\psi$ space for $\tau=5ps$.(\subref{fig_num3potb}) MFPT as a function of $\tau$ (dashed line) and linear interpolation to $\tau=0$ (solid line). Green dashed line: Reference computed via slowest ITS. Middle: (\subref{fig_num3potc}) and (\subref{fig_num3potd}) same as (\subref{fig_num3pota}) and (\subref{fig_num1potb}), but for the controlled process. Bottom: Optimal cost (\subref{fig_num3pote}) and optimal strategy (\subref{fig_num3potf}) for the controlled process with lag time $\tau = 5ps$.}
\end{figure}

\FloatBarrier

\section{Conclusions}

We have developed a Galerkin projection method that leads to an approximation of certain optimal control problems for reversible diffusions by Markov decision problems. The approach is based on the dual formulation of the optimal control problem in terms of a linear boundary value problem that can be discretized in a straightforward way. In this article we propose a discretization that preserves reversibility and the generator form of the linear equations, i.e., the discretization of the infinitesimal generator of the original diffusion process can be interpreted as the infinitesimal generator of a reversible Markov jump process (MJP). The discretized linear boundary value problem admits again a dual formulation in terms of a Markov decision problem.  

Two special cases were discussed in detail: a Galerkin discretization based on a uniform box partition of state space and characteristic functions,  that was found to agree with the known Markov chain approximation to first order in the size of the boxes, and a sparse approximation that uses the basis of committor functions of metastable sets of the dynamics; the latter does not require that the metastable sets partition the state space, hence the method can be applied to high-dimensional problems as they appear, e.g., in molecular dynamics. The committor functions in this case need not be known explicitly, as it is possible to sample the generator matrices and the discrete cost functions by a Monte-Carlo method, similarly to what is done in the Markov state modelling approach to protein folding.  
We could prove an $L^{2}$ error estimate for the Galerkin scheme, moreover the discretization was shown to preserve basic structural elements of the continuous problem, such as duality, reversibility or properties of the invariant measure. Our numerical results showed very good performance of the incomplete partition discretization on a simple toy example and a high-dimensional molecular dynamics problem, even with only a few basis functions, which is in line with the theoretical error bounds presented in this paper. 

While we addressed the discretization error in this paper in great detail, we did not address the sampling error. In particular, for large systems our construction requires the coefficients of the MJP and therefore the transition rates between all metastable states as an input. This is not fully satisfactory. We believe that the optimal control framework presented here should be linked with Monte-Carlo methods for rare events, e.g., \cite{Hartmann2012,is_multiscale}, that exploit the same duality between optimal control and sampling to devise efficient importance sampling strategies as we did so as to reduce the sampling error. Moreover it would be desirable to use the MJP approach in a purely data-driven framework, e.g., for single molecule experiments or other optimal control applications in which a detailed microscopic model may not be feasible or may not be available. All this is ongoing work.

\section*{Acknowledgements}

The authors thank Marco Sarich and Christof Sch\"utte for helpful discussions and Frank No\'e for providing the Alanine dipeptide data. The research was funded by the DFG Research Centre Matheon. RB holds a scholarship of the Berlin Mathematical School.

\appendix

\section{Proof of Theorem \ref{Lemma_p}}\label{app_lemma_p}
 
Here we give the proof of Theorem \ref{Lemma_p} from Section \ref{sec_Galerkin}. For ease of notation, let $\|\cdot\| = \|\cdot\|_\mu$.

Let $\phi$ be the solution to (\ref{eq_bvp2}), and write $\phi = Q\phi + \phi_{\perp} = \phi_{||} + \phi_{\perp}$ with $\phi_\perp \in D^\perp$. The first step is to show that $\|\phi-\phi_{||}\| = \inf_{\psi\in D} \|\phi-\psi\|$, i.e. the infimum in the definition of $\bm{\varepsilon_0}$ is attained at $\phi_{||}$. But this is clear since for any $\psi\in D$, by orthogonality we have

$$\|\phi-\psi||^2 = \|\phi_{||} - \psi + \phi_{\perp}\|^2 = \|\phi_{||} - \psi\|^2 + \|\phi_{\perp}\|^2$$

which attains its minimum of $\bm{\varepsilon_0}^2 = \|\phi_{\perp}\|^2$ for $\psi = \phi_{||}$. By (\ref{eq_bvp2}), $\phi_{||}$ solves the equation

$$\cB(\phi,\psi) = \cB(\phi_{||},\psi) + \cB(\phi_{\perp},\psi) = 0\quad\forall \psi\in D,$$

and if we write $\phi_{||} = \sum_{i=1}^n \hat \phi_i^* \chi_i + 1\chi_{n+1}$ with $n$ unknown coefficients $\hat \phi_i^*$ (note that a general element of $D$ is of this form), this takes the matrix form

$$\hat B \hat \phi^* - c = F,$$

where in components we have $\hat B_{ij} = \cB(\chi_i,\chi_j)$, $c_i = -\cB(\phi_\perp, \chi_i) = -\langle \phi_\perp, B\chi_i\rangle_\mu$ and $F_i = -\langle \chi_i,B\chi_{n+1}\rangle_\mu$. On the other hand, the Galerkin solution $\hat\phi = \sum_i \hat \phi_i \chi_i$ satisfies $\hat B\hat \phi = F$ by \ref{eq_bvp2a}, hence we obtain

\begin{equation}
 \hat B (\hat \phi^* - \hat \phi) = c. \label{eq_matrix3}
\end{equation}

Now we can write

\begin{eqnarray*}
 \bm{\varepsilon}^2 & = & \|\phi_{||} + \phi_\perp - \hat\phi\|^2 = \|\phi_{||} - \hat\phi\|^2 + \|\phi_\perp\|^2\\
 & = & \left\langle \sum_i (\hat \phi_i^* - \hat \phi_i)\chi_i, \sum_j (\hat \phi_j^* - \hat \phi_j) \chi_j \right\rangle_\mu + \bm{\varepsilon_0}^2\\
 & = & (\hat \phi^* - \hat \phi)^T \hat M (\hat \phi^* - \hat \phi) + \bm{\varepsilon_0}^2
\end{eqnarray*}

where $\hat M_{ij} = \langle \chi_i, \chi_j\rangle_\mu$. The scalar product $\langle \cdot, \cdot\rangle_\mu$ on $D_0\subset V$ induces a natural scalar product on $\R^n$ by the isomorphism $\hat \phi \mapsto \sum_i \phi_i \hat \chi_i$:

$$\left\langle \sum_i \hat \phi_i \chi_i, \sum_j \hat \phi'_j \chi_j\right\rangle_\mu = \hat \phi^T \hat M\hat \phi' =: \langle \hat \phi, \hat \phi'\rangle_M$$

The error $\bm{\varepsilon}^2$ is exactly $\bm{\varepsilon_0}^2$ plus the distance between Galerkin solution and best approximation measured in this scalar product. There is also a natural bilinear form inherited from $\cB$ on $\R^n$:

$$\cB\left(\sum_i \hat \phi_i \chi_i,\sum_j \hat \phi_j' \chi_j\right) = \hat \phi^T\hat B \hat \phi' = \langle \hat \phi, \hat M^{-1} \hat B\hat \phi'\rangle_M$$

The Matrix $\hat M^{-1} \hat B$ is symmetric since $\cB(\cdot, \cdot)$ is symmetric. Moreover, since $\cB(\cdot, \cdot)$ is elliptic,

\begin{equation}
\langle \hat \phi, \hat M^{-1}\hat B \hat \phi\rangle_M = A\left(\sum_i \hat \phi_i \chi_i, \sum_j \hat \phi_j \chi_j\right) \geq \alpha_2 \left\langle \sum_i \hat \phi_i \chi_i, \sum_j \hat \phi_j \chi_j\right\rangle_\mu = \alpha_2 \langle \hat \phi,\hat \phi\rangle_M
\label{eq_elliptic}
\end{equation}

In particular, $\hat M^{-1} \hat B$ is positive, hence it has a positive and symmetric square root $\hat S^2 = \hat M^{-1}\hat B$. Now, for any $\hat \phi \in \R^n$ it holds by virtue of (\ref{eq_elliptic}),

\begin{eqnarray}
\langle \hat \phi,\hat \phi\rangle_M & \leq & \frac{1}{\alpha_2}\langle \hat \phi, \hat M^{-1}\hat B\hat \phi\rangle_M = \frac{1}{\alpha_2}\langle \hat S\hat \phi,\hat S\hat \phi\rangle_M \notag\\
&\leq& \frac{1}{\alpha_2^2} \langle \hat S\hat \phi, \hat M^{-1}\hat B \hat S \hat \phi\rangle_M = \frac{1}{\alpha_2^2}\langle \hat M^{-1}\hat B\hat \phi, \hat M^{-1} \hat B \hat \phi\rangle_M.
\label{eq_elliptic2}
\end{eqnarray}

Now we apply the inequality (\ref{eq_elliptic2}) to $\hat\phi^*-\hat\phi$ and use (\ref{eq_matrix3}):

\begin{equation}
\bm{\varepsilon}^2 \leq \bm{\varepsilon_0}^2 + \frac{1}{\alpha_2^2}\langle \hat M^{-1}c,\hat M^{-1}c\rangle_M.
\label{eq_bound}
\end{equation}

Now for some final simplifications, note that the orthogonal projection $Q$ onto $D_0$ can be written as

$$Q\psi = \sum_{i,j=1}^n \hat M^{-1}_{ij}\langle \chi_j, \psi\rangle_\mu \chi_i.$$

Using this we can write

\begin{eqnarray*}
\langle \hat M^{-1}c,\hat M^{-1}c\rangle_M & = & \sum_{ij} c_i\hat M^{-1}_{ij}c_j = \sum_{ij}\langle \chi_i, B\phi_\perp\rangle_\mu M^{-1}_{ij}\langle \chi_j, B\phi_\perp\rangle_\mu\\
& = & \left\langle\sum_{ij} M^{-1}_{ij} \langle \chi_j, B\phi_\perp\rangle_\mu \chi_i, B\phi_\perp\right\rangle_\mu = \langle QB\phi_\perp, B\phi_\perp\rangle_\mu\\
& = & \langle QB\phi_\perp, QB\phi_\perp\rangle_\mu
\end{eqnarray*}

To arrive at the final result, notice that

\begin{eqnarray*}
 \langle QB\phi_\perp, QB\phi_\perp\rangle_\mu & \leq & \left(\sup_{\phi'_\perp\in D^\perp}\frac{\langle QB\phi'_\perp, QB\phi'_\perp\rangle_\mu}{\langle \phi'_\perp,\phi'_\perp\rangle_\mu}\right) \cdot \langle \phi_\perp,\phi_\perp\rangle_\mu \\
 & = & \left(\sup_{\phi'_\perp\in D^\perp}\frac{\langle QBQ^\perp \phi'_\perp, QBQ^\perp \phi'_\perp\rangle_\mu}{\langle \phi'_\perp,\phi'_\perp\rangle_\mu}\right) \cdot \langle \phi_\perp,\phi_\perp\rangle_\mu \\
 & \leq & \left(\sup_{\phi'\in V}\frac{\langle QBQ^\perp \phi', QBQ^\perp \phi'\rangle_\mu}{\langle \phi',\phi'\rangle_\mu}\right) \cdot \langle \phi_\perp,\phi_\perp\rangle_\mu \\
 & = & \|QBQ^\perp\|^2 \langle \phi_\perp,\phi_\perp\rangle_\mu
\end{eqnarray*}

Plugging these inequalities into (\ref{eq_bound}) and dividing by $\bm{\varepsilon_0}^2$ completes the proof. $\Box$

\section{Best-approximation error bound}\label{app_best}

In this appendix, we prove lemma \ref{lemma_best}:

$$\bm{\varepsilon_0} = \|Q^\perp \phi\|_\mu \leq \|P^\perp \phi\|_\mu + \mu(T)^{1/2}\left[\kappa \|f\|_\infty + 2 \|P^\perp \phi\|_\infty \right].$$

Recall that $\kappa = \sup_{x\in T} \bE_x[\tau_{\cS\setminus T}]$ and $P$ is the orthogonal projection onto the subspace $V_c = \{v\in L^2(\cS,\mu), v=const \;\mbox{on every}\; C_i \} \subset L^2(\cS,\mu)$. Note that $P^\perp \phi = 0$ on $C$. The errors $\|P^\perp \phi\|$ and $\|P^\perp \phi\|_\infty$ measure how constant the solution $\phi$ is on the core sets. We write $\|\cdot\| = \|\cdot\|_\mu$ throughout the proof for convenience.

\begin{proof} The proof closely follows the proof of theorem (12) in \cite{Sarich_thesis}. The first step of the proof is to realize that the committor subspace $D$ where $Q$ projects onto can be written as $D = \{v\in L^2(\cS,\mu), v=const \;\mbox{on every}\; C_i, Lv = 0 \;\mbox{on}\; C\}$. To see this, note that the values $v$ takes on the $C_i$ can be used as boundary values for the Dirichlet problem $Lv = 0$ on $T$. A linear combination of committor functions is obviously a solution to this problem. But the solution to the Dirichlet problem must be unique, otherwise one can construct a contradiction to the uniqueness of the invariant distribution, see \cite{Sarich_thesis}.

By definition we have $\|Q^\perp \phi\| \leq \|\phi-I\phi\|$ for every interpolation $I\phi\in D$ of $\phi$. With the definition of $P$ from above, we will take $q=I\phi$ such that

\begin{equation}
Lq = 0 \;\mbox{on}\; T, \quad q = P\phi \;\mbox{on}\; \cS\setminus T.
\label{eq_qbvp}
\end{equation}

Now $D\subset V$, therefore $q\in V_c$ and $Pq = q$. Therefore (\ref{eq_qbvp}) is equivalent to

\begin{equation}
PLPq = 0 \;\mbox{on}\; T, \quad q = P\phi \;\mbox{on}\; \cS\setminus T.
\label{eq_qbvp2}
\end{equation}

Now define $e:=P\phi-q$. Then we have

$$PLPe = PLP(P\phi - q) = PLP\phi - PLPq = PL\phi - PLP^\perp \phi - PLPq$$

and by (\ref{eq_qbvp2}) and since $L\phi = f\phi$ on $\cS\setminus A \supset T$, we have

\begin{equation}
PLPe = Pf\phi - PLP^\perp \phi \;\mbox{on}\; T, \quad e = 0 \;\mbox{on}\; \cS\setminus T.
\label{eq_ebvp}
\end{equation}

Therefore, $e\in E_\Theta = \{v\in L^2(\cS,\mu), v=0 \;\mbox{on}\; \cS\setminus T\}$ and with $\Theta$ being the orthogonal projection onto $E_\Theta$, $e$ has to fulfil

$$\Theta PLP \Theta e = \Theta Pf\phi - \Theta PLP^\perp \phi.$$

Since $\Theta P = P\Theta = \Theta$, this can be written as

$$Re := \Theta L\Theta e = \Theta f \phi - \Theta LP^\perp \phi.$$

The operator $R=\Theta L\Theta$ is invertible on $E_\Theta$: If this wasn't the case, there would be a nontrivial solution $v$ to

$$ Lv = 0 \;\mbox{on}\; T, \quad v = 0 \;\mbox{on}\; \cS\setminus T.$$

But the solution to this boundary value problem is again unique, and hence there is only the trivial solution. This gives

\begin{equation}
e = R^{-1}\Theta f\phi - R^{-1}\Theta LP^\perp \phi,
\label{eq_e}
\end{equation}

and $\|R^{-1}\| = \frac{1}{|\lambda_0|}$ where $\lambda_0$ is the principal eigenvalue of $R$. Due to an estimate by Varadhan we have

$$\frac{1}{|\lambda_0|} \leq \sup_{x\in T}\bE_x[\tau_{\cS\setminus T}] =: \kappa,$$

see e.g. \cite{Bovier2009}. To complete the derivation we need to focus on the second term in (\ref{eq_e}). Since $R^{-1}$ is an operator on $E_\Theta$, we can write it as $R^{-1}\Theta LP^\perp \phi =: \Theta g$, where the function $\Theta g$ solves

$$\Theta L\Theta g = R\Theta g = \Theta L P^\perp \phi \Leftrightarrow \Theta L[\Theta g - P^\perp \phi] = 0 $$

by the definition of $R$ and $\Theta g$. Therefore $w:=\Theta g - P^\perp \phi$ solves the boundary value problem

\begin{equation}
Lw =  0 \;\mbox{on}\; T, \quad w = -P^\perp \phi \;\mbox{on}\; \cS\setminus T
\label{eq_wbvp}
\end{equation}

which implies that $\|w\|_\infty \leq \|P^\perp \phi\|_\infty$, this follows from Dynkin's formula or Lemma 3 in \cite{Sarich_thesis}. Finally,

$$\|\Theta g\| \leq \mu(T)^{1/2}\|\Theta g\|_\infty \leq  \mu(T)^{1/2}(\|P^\perp \phi\|_\infty + \|w\|_\infty) \leq 2\mu(T)^{1/2}\|P^\perp \phi\|_\infty$$

holds by the triangle inequality and the above considerations. Now focus on the first term in (\ref{eq_e}). Note that by the maximum principle, $\phi$ achieves its maximum of $1$ on the boundary of $A^c\supset T$, therefore $\max_{x\in T} |\phi(x)| \leq 1$. Then we have

$$\|\Theta f \phi\| \leq \mu(T)^{1/2}\|f\|_\infty \max_{x\in T}|\phi(x)| \leq \mu(T)^{1/2}\|f\|_\infty.$$

Now putting everything together, we arrive at

\begin{eqnarray*}
\|e\| &\leq & \|R^{-1}\|\|\Theta f \phi\| + \|R^{-1}\Theta L P^\perp \phi\| \\
& \leq &  \kappa \|\Theta f \phi\| + \|\Theta g\| \\
& \leq & \mu(T)^{1/2}\left[ \kappa \|f\|_\infty + 2\|P^\perp \phi\|_\infty \right].
\end{eqnarray*}

Finally, note that by the triangle inequality

$$\|Q^\perp \phi\| \leq \|\phi-q\| \leq \|\phi-P\phi\| + \|P\phi - q\| = \|P^\perp \phi\| + \|e\|$$

which completes the proof.
\end{proof}

\section{Finite-volume approximation}\label{app_FVA}

In this section we show (\ref{FVA_gen}), confirming that the Galerkin projection of $L$ onto step functions gives the finite volume approximation discussed in \cite{Latorre2011}. Recall the definitions of $S_{ij}$, $h_{ij}$ and $A_i$ given in figure (\ref{fig_fva_mesh}). We use the divergence representation $L\phi = e^{\beta V}\nabla\cdot(e^{-\beta V}\nabla \phi)$ with $\beta=\eps^{-1}$ and calculate

\begin{eqnarray*}
 \langle \chi_i,L\chi_j\rangle_\mu & = & \beta^{-1} \int_\mathcal{S} \:\chi_i \: e^{\beta V} \nabla \cdot \left(e^{-\beta V}\nabla \chi_j\right) e^{-\beta V}dx \\
 & = & \beta^{-1} \int_{A_i} \nabla \cdot \left(e^{-\beta V}\nabla \chi_j\right)dx \\
 & = & \beta^{-1} \int_{\partial A_i} e^{-\beta V}(\nabla \chi_j)\cdot \nu ds
\end{eqnarray*}

where $\nu$ is the surface normal vector field of $\partial A_i$. We write the integral over $\partial A_i$ as a sum over surface integrals over $S_{ij'}$ where $j'$ ranges over the set $\{i_l\}$ of neighbours of $i$ and approximate the surface integrals by a point evaluation of the integrand at the midpoint $\bar x_{ij'}$ times the area of $S_{ij'}$. That gives

$$ \langle \chi_i,L\chi_j\rangle_\mu \approx \beta^{-1} \sum_{j'\in \{i_l\}} m(S_{ij'}) e^{-\beta V(\bar x_{ij'})} \left( \nabla \chi_j \cdot \nu\right) |_{x = \bar x_{ij'}}$$

Now we can approximate the directional derivative of $\chi_j$ using a two-sided finite difference:

$$\nabla \chi_j \cdot \nu \Big|_{x = \bar x_{ij'}} = \frac{\nabla \chi_j \cdot h_{i,j'}}{m(h_{i,j'})} \Big|_{x = \bar x_{ij'}} \approx \frac{\chi_j(x_{j'}) - \chi_j(x_i)}{m(h_{ij'})} = \frac{\delta_{jj'}-0}{m(h_{ij'})}.$$

Hence in the sum over neighbours of $i$, only $j$ survives. Now we put everything together:

$$\langle \chi_i,L\chi_j\rangle_\mu \approx \beta^{-1} \frac{m(S_{ij})}{m(h_{ij})} e^{-\beta V(\bar x_{ij})}.$$

Finally, we divide by $\pi_i$ using (\ref{eq_FVA_dist}):

\begin{equation*}
 K_{ij} = \frac{1}{\pi_i}\langle \chi_i,L\chi_j\rangle_\mu \approx \frac{1}{\Delta_{ij}} e^{-\beta (V(\bar x_{ij})-V(x_i))}, \quad \Delta_{ij}^{-1} = \beta^{-1} \frac{m(S_{ij})}{m(h_{ij})m(A_i)}.
\label{FVA_gen2}
\end{equation*}

which confirms (\ref{FVA_gen}).

\section{Markov chain approximations}\label{app_MCA}

We now show (\ref{eq_mca1}). Let $i$ and $j$ be nearest neighbours, and $\beta = \eps^{-1}$. For a regular $d$-dimensional lattice with lattice spacing $h$,

$$\Delta_{ij} = \beta\frac{m(h_{ij})m(\Omega_i)}{m(S_{ij})} = \beta\frac{h h^{d}}{h^{(d-1)}} = \beta h^2.$$

Therefore, $G$ as given by (\ref{FVA_gen}) simplifies to

$$G_{ij} = \frac{1}{\beta h^2} e^{-\beta(V(\bar x_{ij}) - V(x_i))}.$$

We introduce the function $\hat W_v(i) = -\beta^{-1}\log v(i)$. Then, for neighbours $i,j$,

$$G^v_{ij} = \frac{1}{\beta h^2} e^{-\beta(V(\bar x_{ij}) - V(x_i) + \hat W_v(j) - \hat W_v(i)) }.$$

Now we specialise to the one-dimensional case, thus $j = i\pm 1$. We write $V(\bar x_{i,i \pm 1}) -V(x_i) = \pm \frac{h}{2} \nabla V(x_i) + \mathcal{O}(h^2)$. Expanding the exponential gives

\begin{eqnarray*}
 G^v_{i,i \pm 1} & = & \frac{1}{\beta h^2}\left(1 - \frac{\beta h}{2}\left(\pm \nabla V(x_i)\right) - \beta(\hat W_v(i \pm 1) - \hat W_v(i)) + \mathcal{O}(h^2)\right)\\
 & = & \frac{1}{h^2}\left(\beta^{-1} - \frac{h}{2}\left(\pm \nabla V(x_i) +2\frac{\hat W_v(i \pm 1) - \hat W_v(i)}{h}\right) + \mathcal{O}(h^2)\right)\\
 & = & \frac{1}{h^2}\left(\beta^{-1} \mp \frac{h}{2} \left(\nabla V(x_i) -\alpha_v^\pm(i) \right) + \mathcal{O}(h^2)\right)
\end{eqnarray*}

with the definition

$$\alpha_v^\pm(i) := \pm \left(-2\frac{\hat W_v(i\pm 1)-\hat W_v(i)}{h}\right).$$

Now consider the difference between $\alpha^+_v(i)$ and $\alpha^-_v(i)$:

$$\alpha^+_v(i) - \alpha^-_v(i) = -2h\frac{\hat W_v(i+1)-2\hat W_v(i) + \hat W_v(i-1)}{h^2}$$

Assuming that $\hat W_v$ converges to a twice differentiable function, the difference between $\alpha^+_v(i)$ and $\alpha^-_v(i)$ is of order $h$. In other words we may write $\alpha^\pm_v(i) = \alpha_v(i) + \mathcal{O}(h)$ where

$$\alpha_v(i) = \frac{1}{2}(\alpha^+_v(i) + \alpha^-_v(i)) =  \frac{1}{h\beta}\left(\log v(i+1) - \log v(i-1)\right).$$

Then

$$G^v_{i,i\pm 1} = \frac{1}{h^2}\left(\beta^{-1} \mp \frac{h}{2} \left(\nabla V(x_i) -\alpha_v(i) \right) + \mathcal{O}(h^2)\right)$$

which confirms (\ref{eq_mca1}). Now we show (\ref{eq_mca_cost}). We use the representation (\ref{kv}) for $k^v(i)$:

\begin{eqnarray*}
 \beta k^v(i) & = &\sum_{j\neq i} G_{ij}\left\{\frac{v(j)}{v(i)} \left[\log \frac{v(j)}{v(i)} - 1\right] + 1\right\} \\
 & = & G_{i,i+1}\frac{v(i+1)}{v(i)}\left(\log \frac{v(i+1)}{v(i)} - 1\right) + G_{i,i-1}\frac{v(i-1)}{v(i)}\left(\log \frac{v(i-1)}{v(i)} - 1\right) - G_{ii}
\end{eqnarray*}

Now we write this in terms of the shorthands $\alpha^\pm := \pm h^{-1} \log\frac{v(i\pm 1)}{v(i)}$. Notice that $\alpha^\pm = \frac{\beta}{2}\alpha^\pm_v(i)$ and use the formula for $G$ above:

\begin{eqnarray*}
 \beta k^v(i) & = & \frac{1}{\beta h^2}\left[e^{h\alpha^+} (h\alpha^+ - 1) - e^{-h\alpha^-}(h\alpha^- + 1)+2\right]
   \\&& - \frac{\nabla V(x_i)}{2h}\left[e^{h\alpha^+} (h\alpha^+ - 1) + e^{-h\alpha^-}(h\alpha^- + 1)\right] + \mathcal{O}(h) \\
 & = & \frac{1}{\beta h^2}\left[-1-1+2\right] + \frac{1}{\beta h}\left[\alpha^+ - \alpha^- - \alpha^+ + \alpha^-\right] + \frac{1}{2\beta}\left[(\alpha^+)^2+(\alpha^-)^2\right] \\
 & & - \frac{\nabla V(x_i)}{2h}[-1+1] - \frac{\nabla V(i)}{2}\left[\alpha^+ -\alpha^+ + \alpha^- - \alpha^-\right] + \mathcal{O}(h) \\
 & = & \frac{1}{2\beta}\left[(\alpha^+)^2+(\alpha^-)^2\right] + \mathcal{O}(h)\\
 & = & \frac{\beta}{4}\alpha^2_v(i) + \mathcal{O}(h) = \frac{\beta}{4}\alpha^2_v(i) + \mathcal{O}(h).
\end{eqnarray*}

This confirms (\ref{kv}). In the second step, we have used Taylor expansions of $e^{h\alpha^\pm}$ up to second order. In the last step, we have used $\alpha_v^\pm(i) = \alpha_v(i) + \mathcal{O}(h)$.

\section{Sampling of the discretized running cost}\label{app_samplingF}

We show the sampling formula (\ref{eq_samplingF}) for $F$:

$$F_{ij} = \bE_\mu\left[f(X_t) \chi_{\{\tilde X_t^{+}= j\}}\middle| \tilde X_t^- = i\right].$$

Recall that since the dynamics is reversible, $\chi_i(x) = \bP(\tilde X_t^\pm = i|X_t = x)$ with $\tilde X_t^\pm$ being the forward and backward milestoning processes defined in Section \ref{sec_interpretation}. Then,

\begin{eqnarray*}
 \hat F_{ij} & = & \int f(x)\chi_j(x) \chi_i(x)\mu(x)dx = \int f(x)\bP(\tilde X_t^{+} = j|X_t = x)\bP(\tilde X_t^{-} = i,X_t = x)dx \\
 & = & \int f(x)\bP(\tilde X_t^{+} = j,\tilde X_t^{-} = i, X_t = x)dx \\
 & = & \int f(x)\bP(\tilde X_t^{+} = j,\tilde X_t^{-} = i | X_t = x) \bP(X_t = x)dx \\
 & = & \int f(x)\bE\left(\chi_{\{\tilde X_t^{+} = j,\tilde X_t^{-} = i\}} \middle| X_t = x\right) \bP(X_t = x)dx \\
 & = & \bE_\mu\left[f(X_t) \chi_{\{\tilde X_t^{+}=j,\tilde X_t^{-} = i\}}\right].
\end{eqnarray*}

This completes the proof.

\bibliographystyle{alpha}

\bibliography{biblio2,control}

\end{document}